\documentclass[letterpaper,10pt,conference]{ieeeconf}  

\IEEEoverridecommandlockouts                              
\usepackage{amsmath}
\usepackage{amssymb}
\usepackage{algorithm}  
\usepackage{algorithmicx}  
\usepackage{algpseudocode}  
\usepackage{graphicx}
\usepackage{float}
\usepackage{frenchineq}

\usepackage{hyperref}
\usepackage{cleveref}
\usepackage[percent]{overpic}

\usepackage[caption=true,font=scriptsize]{subfig}
\usepackage[colorinlistoftodos,bordercolor=orange,backgroundcolor=orange!20,linecolor=orange,textsize=scriptsize]{todonotes}

\newcommand{\R}{\mathbb{R}}

\newcommand{\Rm}{\R_{\max}}

\newcommand{{\Mw}}{\mathcal{M}}

\newcommand{\X}{X}

\DeclareMathOperator*{\argmax}{arg\,max}

\makeatletter
\let\NAT@parse\undefined
\makeatother
\usepackage[numbers]{natbib}

\newenvironment{skproof}{\noindent\hspace{2em}{\itshape Sketch of Proof: }}{\hspace*{\fill}~\QED\par\endtrivlist\unskip}

\newtheorem{theorem}{Theorem}[section]
\newtheorem{proposition}{Proposition}[section]

\newtheorem{assumption}{Assumption}[section]

\newtheorem{remark}{Remark}[section]

\title{\LARGE \bf
	Tropical low-rank approximation and application to optimal control of N-body systems
}

\author{Marianne Akian, St\'ephane Gaubert, Shanqing Liu and Yang Qi
	\thanks{Marianne Akian, St\'ephane Gaubert are with Inria and CMAP, \'Ecole polytechnique,  Institut Polytechnique de  Paris, CNRS, 
	  {\tt\footnotesize \{Marianne.Akian; Stephane.Gaubert\}@inria.fr}}%
	\thanks{Shanqing Liu is with Division of Applied Mathematics, Brown University, {\tt\footnotesize Shanqing\_Liu@brown.edu}}%
        \thanks{This work was partly done when Shanqing Liu and Yang Qi were with Inria and CMAP, \'Ecole polytechnique,  Institut Polytechnique de  Paris, CNRS, {\tt\footnotesize yang.qi@inria.fr}}
        }
\begin{document}
	
	\maketitle
	\thispagestyle{empty} 
	\pagestyle{empty}
	\begin{abstract} 
	  We study the approximation of the value function of deterministic optimal control problems with fixed initial state, motivated by \(N\)-body systems. In this setting, the action functional consists of local kinetic and potential terms, along with an interaction potential. We exploit this structure to approximate the value function using a tropical tensor of small rank, i.e.\ a supremum of a small number of additively separable functions. We propose a trajectory-based tropical low-rank approximation method. Rather than propagating basis functions globally,  as in usual tropical numerical methods, the approximation of the value function is improved only along a sequence of relevant trajectories. The resulting approximations form a monotone family of computable lower bounds for the exact value function, with the tropical tensor rank increasing at most linearly with the number of outer iterations. Under suitable regularity assumptions, we show that at the initial state, and also at the optimal trajectory starting from this state, the lower bounds converge to the exact value. In the $N$-body setting, the generated basis functions remain additively separable across subsystems, thereby yielding a structured tropical low-rank approximation. Numerical experiments on $N$-body systems with Coulomb-type repulsion illustrate the effectiveness of the approach up to state dimension \(200\), within a  half hour time budget.

	\end{abstract}

	\section{INTRODUCTION}
	
	\subsection{Motivation and Context}
	
	In this work, we consider the numerical approximation of the value function of a deterministic optimal control problem. By the \emph{dynamic programming principle}~\cite{flemingsoner,bardi2008optimal}, the value function is characterized as the \emph{viscosity solution} of a first-order \emph{Hamilton--Jacobi--Bellman} (HJB) equation~\cite{crandall1983viscosity,crandall1984some}. Compared with local optimality conditions such as \emph{Pontryagin's maximum principle}~\cite{raymond1998pontryagin,raymond1999pontryagin}, this approach has the major advantage of characterizing the global optimum. However, one of its main limitations is the \emph{curse-of-dimensionality}, that is  solving the Bellman equation or the associated nonlinear recurrence on the whole state space typically requires grid-based discretizations whose complexity grows exponentially with the dimension~\cite{Crandall1984TwoAO,falcone2013semi}.
	
	This difficulty is particularly severe for $N$-body systems, whose state variable takes the form
	\[
	x=(x_1,\dots,x_N)\in (\mathbb{R}^d)^N,
	\]
	so that the state dimension is \(n=Nd\). The present work is motivated by the observation that, in such systems, the action functional 
        associated with the least action principle, often exhibits a structured \textit{tropical} separability. 
        More precisely, 
        if the interaction energy admits a tropical (max-plus) tensor low-rank approximation, of the form
        \[
	G(x)=\max_{r=1,\dots,R}\bigl(g_1^r(x_1)+\cdots + g_N^r(x_N)\bigr),
      \]
	then the corresponding one-step dynamic programming update naturally preserves a max-plus representation by additively separable basis functions. This suggests a tropical analogue of classical low-rank approximation, adapted to the product structure of $N$-body systems.
	
	Such a viewpoint is closely related to max-plus (tropical) numerical  methods for optimal control and HJB equations~\cite{fleming2000max,Mc2006,akian2008max,lakhouathesis,Mc2007,qu2014contraction,akian2025stochastic,darbon2023neural}. These methods exploit the max-plus linearity of the HJB evolution semigroup, namely the \emph{Lax--Oleinik semigroup}~\cite{maslov1987methodes}, and represent the value function as a supremum of basis functions, which is then propagated in time. 
	
	 In analogy with classical tensor low-rank approximation, where a high-dimensional object is represented by a small number of separable terms, tropical tensor low-rank approximation represents a function as the supremum of a small number of additively separable terms~\cite{saadi2021zero,akian2023tropical}. In the $N$-body setting, the simplest such objects are tropical tensor rank-one basis functions, obtained as sums of terms depending on each subsystem separately.
	
	 However, a direct propagation of tropical tensor low-rank representations over time leads to a rapid growth of rank, even when the initial representation is low-rank, the number of basis functions typically increases exponentially with the number of time steps. This is the so-called \textit{curse-of-complexity} in curse of dimensionality free tropical numerical methods developed by McEneaney~\cite{Mc2007,mceneaney2008curse} 
         which was overcomed there by alternating propagation and pruning steps.
         Another way to overcome the curse-of-complexity proposed by Qu~\cite{Qu2014} and then in \cite{akian2025stochastic} is to replace pruning by sampling. This technique was also used in  \cite{akian:hal-01675067} in the context of stochastic control problems.

         Here we rather take inspiration from trajectory-based dynamic programming approaches used in stochastic dual dynamic programming (SDDP) method for solving convex multistage stochastic optimization problems~\cite{pereira1991multi,shapiro2011analysis,girardeau2015convergence,guigues2020inexact}, or in point based methods for solving partially observable Markov Decision Processes \cite{shani2013survey}.                           Rather than propagating basis functions over the whole state space, we propagate and enrich the approximation only along a sequence of relevant trajectories.
         A related idea was considered in  \cite{akian2025stochastic}, for tropical approximations using random trajectories.
	Related ideas that focus on approximating value functions along one or several trajectories also appear in tree-based discretizations~\cite{alla2019efficient,alla2020tree}, adaptive control discretizations~\cite{bokanowski2022optimistic}, and multilevel methods~\cite{akian2023multi,akian2023adaptive}.

	The central idea of the present work is therefore to employ tropical tensor low-rank approximation along with a deterministic trajectory-based propagation. In this way, we retain a functional approximation of the value function by computable lower bounds and an associated approximation of optimal trajectory, while avoiding the full combinatorial explosion of basis functions that would arise in a global propagation.

        Dower and McEneaney previously applied max-plus methods to the representation of solutions of $N$-body problems from classical mechanics~\cite{McEneaney2015}, reducing the computation of fundamental solutions to differential games. In contrast, our method differs by its numerical motivation and by its trajectory-based and low-rank approximation nature. 

	\subsection{Contribution}
	
	In this paper, we approximate the value function of a deterministic optimal control problem with fixed initial state, together with an associated optimal trajectory, by means of a trajectory-based tropical tensor low-rank approximation. We consider a maximization problem in which the running reward is a semiconvex function of the state variable.
	Under suitable assumptions, we show that semiconvexity is propagated by the discrete Bellman equation, yielding that at each time, the value function is the supremum of (general) concave quadratic functions with the same curvature, that play the role of 
        lower supporting (quadric) hypersurfaces for (the epigraph of) the value function. 
	Based on this, we propose an iterative backward-forward algorithm which, starting from a reference trajectory, enriches a lower approximation by adding one quadratic function at each time step and at each outer iteration.
        The added quadratic functions correspond to supporting hypersurfaces
         of the approximate value function at the trajectory points.
        Then, the trajectory is updated by choosing an optimal trajectory starting in the fixed initial state and associated to the new approximation of the value function. 
	The resulting approximations of the value function form a monotone sequence of computable lower bounds, while the tropical tensor rank grows at most linearly with the number of outer iterations.
	
	We then apply this framework to $N$-body systems. 
	In this setting, the generated basis functions are additively separable across subsystems, hence they have tropical tensor rank one. This yields a structured tropical tensor low-rank approximation of the value function. Numerical experiments  with Coulomb-type interaction illustrate the approach up to dimension \(200\), within a
 half hour        time budget.
	
	The paper is organized as follows. \Cref{sec-pre} recalls preliminaries on optimal control, tensor low-rank approximation, and tropical (max-plus) algebra. \Cref{sec-lowrank} introduces tropical tensor low-rank approximation and its connection with $N$-body systems. \Cref{section-algo} presents the trajectory-based iterative algorithm. \Cref{numerics_nbody} reports numerical results.

	\section{PRELIMINARIES}\label{sec-pre}

  \subsection{Max-plus algebra and optimal control}
  The max-plus or tropical semifield is the set $\R\cup\{-\infty\}$ of
  real numbers with $-\infty$, endowed with the laws 
  $ a\oplus b:=\max\{a,b\}$ and $a\odot b:=a+b$ 
  as addition and multiplication. This semifield will be denoted by
  $\R_{\max}$. 

  Max-plus algebra appears naturally in deterministic optimal control. 
  Consider the finite-horizon problem
  \begin{equation*}\label{eq:ocp}
  	v(x,t):=\sup_{u(\cdot)}\left\{
  	\int_t^T \ell(x^u(s),u(s))\,ds+\phi(x^u(T))
  	\right\},
  \end{equation*}
  where $x^u:[0,T]\to \R^n$ solves
  \[
  \dot x^u(s)=f(x^u(s),u(s)),
  \qquad x^u(t)=x .
  \]
  Under standard assumptions, the value function $v$ is the viscosity solution of the HJB equation
  \begin{equation}\label{eq:HJB}
  	\left\{ 
  	\begin{aligned}
  		&-\partial_t v(x,t)-H(x,Dv(x,t))=0, \\ 
  		& v(x,T)=\phi(x),
  	\end{aligned}
  	\right.
  \end{equation}
  with Hamiltonian
  \[
  H(x,p):=\sup_{u\in U}\{p\cdot f(x,u)+\ell(x,u)\}.
  \]
  The associated Lax--Oleinik semigroup $S^\tau$ of~\eqref{eq:HJB}, i.e. the evolution semigroup of this PDE, is max-plus linear, meaning that for all
  (smooth) functions $\phi,\psi:\R^n\to \Rm$ and scalars $\lambda\in\Rm$, we have 
  \begin{equation*}\label{eq:mplinear}
  	\left\{
  	\begin{aligned}
  		& S^\tau(\phi \oplus \psi)=S^\tau\phi \oplus S^\tau\psi, \\ 
  		& S^\tau(\lambda\odot \phi)=\lambda \odot S^\tau\phi.
  	\end{aligned}
  	\right.
  \end{equation*}
  Here $\phi\oplus \psi$ is the pointwise max-plus addition of functions $\phi$ and $\psi$, which coincides with the supremum of these functions for the usual partial order.
  The above max-plus linearity property is one of the foundations of max-plus numerical methods for Hamilton--Jacobi equations~\cite{fleming2000max,akian2008max}.

    \subsection{Tensor low-rank decomposition and approximation} 
    
    Let
    $
    \X = \X_1\times \cdots \times \X_N
    $
    be a Cartesian product, and let
    $
    F : \X \to \R
    $
    be a multivariate function. 
    A usual tensor approximation of rank $r$ 
    seeks a representation of the form
    \begin{equation}\label{eq:classical_low_rank}
    	F(x_1,\dots,x_N)\approx \sum_{q=1}^r \prod_{i=1}^N f_{q,i}(x_i),
    \end{equation}
    where each factor depends only on one variable. 
    After discretization on a tensor-product grid, \eqref{eq:classical_low_rank} becomes a tensor low-rank decomposition. 
    This viewpoint underlies canonical, Tucker, tensor-train, and hierarchical tensor formats, whose purpose is to replace a high-dimensional object by a structured representation with moderate complexity.

   Usual tensor low-rank  approximation of the solution of HJ equations have been proposed in \cite{zbMATH07364328,zbMATH07924430}.
    Here, tensor low-rank approximations will be understood in the tropical sense, leveraging the tropical (max-plus) linearity of the Lax-Oleinik semigroup. 
    
	\section{TROPICAL TENSOR LOW-RANK APPROXIMATION AND OPTIMAL CONTROL OF $N$-BODY SYSTEM}\label{sec-lowrank}
	
	We now introduce the tropical tensor low-rank viewpoint and relate it to the optimal control of $N$-body systems. Whereas we introduced above the notations $\oplus$ and $\odot$ to highlight the linearity property of the Lax-Oleinik semigroups, we will now use the more familiar symbols $\vee$ for maximization or pointwise maximization (instead of $\oplus$) and $+$ (instead of $\odot$).
	
	\subsection{Max-plus separable and tensor low-rank approximation}
	
	Let $\X=\X_1\times \cdots \times \X_n \subset (\R^d)^n$ be a compact product set. 
	A function $G:\X\to \R_{\max}$ is said to have \emph{tropical tensor rank one} if it can be written as
	\begin{equation*}\label{eq:trop_rank_one}
		G(x_1,\dots,x_n)=\sum_{i=1}^n g_i(x_i),
	\end{equation*}
	for some functions $g_i:\X_i\to \R_{\max}$. 
	More generally, a tropical tensor rank-$r$ approximation of $F:\X\to \R_{\max}$ is an expression of the form
	\begin{equation}\label{eq:trop_rank_r}
		F(x)\approx \bigvee_{k=1}^r \tilde F^k(x),
		\
		\tilde F^k(x):=\sum_{i=1}^n \tilde F_i^k(x_i) \ ,
	\end{equation}
	where $\vee$ denotes the pointwise maximum or supremum. 
	In this paper, we mainly consider \emph{lower} approximations,
	\begin{equation*}\label{eq:lower_approx}
		\bigvee_{k=1}^r \tilde F^k(x)\le F(x),
		\qquad x\in \X\enspace.
	\end{equation*}
	
	Finite suprema of tropical tensor rank-one functions form a rich approximation class on compact product sets. 
	
	\begin{proposition}\label{prop:dense_tropical_separable}
		Let $\X=\X_1\times\cdots\times \X_n\subset (\R^d)^n$ be compact, and let $F$ be a continuous function $X\to \R$. 
		Then, for every $\varepsilon>0$, there exist $r\ge 1$ and continuous functions $\tilde F_i^k:\X_i\to \R$ such that
		\begin{equation*}\label{eq:dense_tropical_separable}
			F(x)-\varepsilon
			\le
			\bigvee_{k=1}^r \sum_{i=1}^n \tilde F_i^k(x_i)
			\le
			F(x),
			\ \forall x\in \X.
		\end{equation*}
	\end{proposition}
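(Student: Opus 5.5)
We construct, for each point $y\in\X$, a separable continuous function that lies below $F$ everywhere and is within $\varepsilon$ of $F$ at $y$. A compactness argument then shows that finitely many such functions suffice.

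First, $F$ is uniformly continuous on the compact set $\X$. Fix $\varepsilon>0$ and choose $\delta>0$ such that $|F(x)-F(x')|\le\varepsilon/2$ whenever $\|x-x'\|_\infty<\delta$, where $\|x-x'\|_\infty:=\max_i |x_i-x'_i|$ and each $x_i\in\X_i\subset\R^d$ carries the Euclidean norm. Let $M:=\max_\X F-\min_\X F<\infty$. For $y=(y_1,\dots,y_n)\in\X$, define
\[
G^y(x):=F(y)-\frac{\varepsilon}{2}-\sum_{i=1}^n \frac{M}{\delta}\,|x_i-y_i| ,
\]
which is a sum of continuous functions $\tilde F^y_i(x_i)$ of the single blocks. (The constant $F(y)-\varepsilon/2$ is absorbed into, say, $\tilde F^y_1$.)

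The key step is to check that $G^y\le F$ on $\X$. There are two cases.
\begin{itemize}
\item If $\|x-y\|_\infty<\delta$, then $G^y(x)\le F(y)-\varepsilon/2\le F(x)$ by the choice of $\delta$.
\item Otherwise some block satisfies $|x_i-y_i|\ge\delta$. The penalty then totals at least $M$, so $G^y(x)\le F(y)-M-\varepsilon/2<\min_\X F\le F(x)$.
\end{itemize}
This is the only nontrivial point. Using the max-norm, rather than a Euclidean norm on the whole product, is what allows a separable penalty $\sum_i |x_i-y_i|$ to dominate the non-separable localization.

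Next, $G^y(y)=F(y)-\varepsilon/2$, and $G^y$ is continuous. Hence the set $U_y:=\{x\in\X: G^y(x)>F(x)-\varepsilon\}$ is an open neighbourhood of $y$ in $\X$. By compactness, finitely many sets $U_{y^1},\dots,U_{y^r}$ cover $\X$. Put $\tilde F^k_i:=\tilde F^{y^k}_i$.

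Then $\bigvee_{k=1}^r G^{y^k}\le F$ because each term lies below $F$. Moreover, every $x\in\X$ lies in some $U_{y^k}$, so $\bigvee_{k=1}^r G^{y^k}(x)\ge G^{y^k}(x)>F(x)-\varepsilon$. Both inequalities of the proposition hold with continuous factors.

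An alternative route would be Stone–Weierstrass for max-plus lattices, but the direct construction above avoids it. The only place requiring care is the separable localization handled in the two cases above.
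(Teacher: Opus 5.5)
Your proof is correct and follows the same scheme the paper has in mind: for each $y\in\X$ you build a separable minorant of $F$ that is within $\varepsilon/2$ of $F$ at $y$, and then take a finite subcover by compactness. The only difference is the choice of penalty. The paper indicates that the pieces can be taken quadratic, i.e.\ $F(y)-\varepsilon/2-\frac{c}{2}\|x-y\|^2$ with $c$ large, whose separability is automatic since $\|x-y\|^2=\sum_i\|x_i-y_i\|^2$. Your argument goes through verbatim with $\frac{M}{\delta^2}\|x_i-y_i\|^2$ in place of $\frac{M}{\delta}\|x_i-y_i\|$. This gives concave quadratics of exactly the form used by the algorithm, and it also shows that the max-norm localization is a convenience rather than the crux of the argument.
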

        The proof is elementary. In fact, the functions $\tilde F_i^k$ can be chosen to be quadratic, the result being reminiscent of the Moreau-Yoshida regularization. 
	In some situations, the purely separable form \eqref{eq:trop_rank_r} is too restrictive. 
	One may take generalized tensor low-rank classes
	\begin{equation}\label{eq:generalized_tropical_rank_r}
		F(x)\approx \bigvee_{k=1}^r \tilde G^k(x),
		\ 
		\tilde G^k(x):=\sum_{i=1}^n \tilde G_i^k(A_i^k x) \ ,
	\end{equation}
	where $A_i^k$ are low-dimensional linear projections. 
	The basic separable form \eqref{eq:trop_rank_r} is recovered by taking $A_i^k$ equal to the canonical block projections.
	
	The representation \eqref{eq:generalized_tropical_rank_r} should be viewed as an intermediate model class between fully separable tropical tensor rank-one terms and a  general high-dimensional form. 
	It allows each factor to depend on a projected variable, which can encode part of the interaction structure while keeping the complexity controlled. 
	In the present paper we mainly work with the purely separable quadratic supporting functions generated by the algorithm; the same trajectory-based philosophy could be combined with projected tensor low-rank classes in future work.
	
	\subsection{Optimal control formulation of a $N$-body system}
	
	Let us consider $N$ interacting subsystems, with joint state 
	\[
	\xi(s) = (\xi_1(s),\dots,\xi_N(s)) \in (\R^d)^N \ .
	\]
	For each particle $i$, let $V_i:\R^d\to\R$ be an individual potential and
	$
	T_i(\dot{\xi}_i):=\frac12\dot{\xi}_i^\top M_i\dot{\xi}_i
	$
	a quadratic kinetic term, where $M_i$ is a symmetric positive definite matrix.
	Let
	\begin{equation*}\label{interaction}
		W:(\R^d)^N\to \R
	\end{equation*}
	be an interaction potential. 
	For a horizon $t>0$, an initial state $x$, and a terminal cost $\Phi_t$, we consider the Bolza functional
	\begin{equation*}\label{action_function}
		\begin{aligned}
		\mathcal J_t(\xi(\cdot))
		:=
		\int_0^t
		&\Big(
                  \sum_{i=1}^N \big( V_i(\xi_i(s)) + T_i(\dot{\xi}_i(s)) \big)
                  \\ 
		&\qquad \qquad + W(\xi(s))
		\Big) ds
		+ \Phi_t(\xi(t)) \ .
		\end{aligned}
	\end{equation*}
	The associated value function is
	\begin{equation*}\label{eq:manybody_value}
		\mathcal V(x,t)
		:=
		\inf_{\xi \in W^{1,\infty}([0,t];(\R^d)^N),\ \xi(0)=x}
		\mathcal J_t(\xi(\cdot)).
	\end{equation*}
	
	Under standard coercivity assumptions, this variational problem is well posed.
	
	\begin{proposition}\label{prop:existence_many_body}
		Suppose that the matrices $M_i$ are symmetric positive definite, the functions $V_i$, $W$, and $\Phi_t$ are continuous and bounded from below, and the total potential is coercive at infinity. 
		Then, for every $x\in(\R^d)^N$ and $t>0$, the functional $\mathcal J_t$ admits a minimizer in
		$
		\left\{
		\xi \in W^{1,\infty}([0,t];(\R^d)^N)  \mid \xi(0)=x
		\right\}.
		$ 
	\end{proposition}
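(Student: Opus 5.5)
The plan is the direct method of the calculus of variations, applied first in the Sobolev space $H^1([0,t];(\R^d)^N)$, followed by a regularity step showing that the minimizer is Lipschitz, hence lies in $W^{1,\infty}$. Write $L(\xi,\dot\xi):=\sum_i T_i(\dot\xi_i)+U(\xi)$ with $U:=\sum_i V_i+W$. By assumption $U$ is continuous and bounded below, say $U\ge -c$. Let $\lambda>0$ be the smallest eigenvalue among the $M_i$, so that $\sum_i T_i(p_i)\ge \frac{\lambda}{2}|p|^2$.

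\emph{Step 1 (same infimum over $H^1$).} Consider $\mathcal J_t$ on $\{\xi\in H^1:\xi(0)=x\}$. Any $H^1$ curve with $\mathcal J_t(\xi)<\infty$ can be approximated in $H^1$ by Lipschitz (even smooth) curves with the same initial point. $H^1$ convergence in one dimension implies uniform convergence. Since $U$ and $\Phi_t$ are uniformly continuous on bounded sets, the approximating values of $\mathcal J_t$ converge. Hence the infimum over $W^{1,\infty}$ equals the infimum $m$ over $H^1$, and $m<\infty$ (take the constant curve).

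\emph{Step 2 (existence in $H^1$).} Take a minimizing sequence $\xi^k$. The lower bounds $U\ge -c$ and $\Phi_t\ge -c'$ give
\[
\frac{\lambda}{2}\|\dot\xi^k\|_{L^2}^2\le m+1+ct+c'.
\]
Together with $\xi^k(0)=x$, this bounds $\xi^k$ in $H^1$. Extract a subsequence converging weakly in $H^1$ and uniformly on $[0,t]$ to some $\bar\xi$. The kinetic part is convex and continuous in $\dot\xi$, hence weakly lower semicontinuous. The potential and terminal parts converge by uniform convergence and continuity of $U$ and $\Phi_t$ (or by Fatou's lemma, using boundedness from below). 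Thus $\mathcal J_t(\bar\xi)\le m$, and $\bar\xi$ minimizes over $H^1$. In this finite-horizon setting, coercivity of the potential is not needed beyond the lower bounds. It would only be required to confine trajectories if one replaced $t$ by an unbounded horizon or weakened the lower bound on $\Phi_t$.

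\emph{Step 3 (Lipschitz regularity, the main obstacle).} The task is to show $\dot{\bar\xi}\in L^\infty$. Because $U$ is merely continuous, the Euler--Lagrange equation is unavailable. However, the Lagrangian is autonomous, so I would use inner variations, i.e.\ time reparametrizations. For $\eta\in L^\infty(0,t)$ with $\int_0^t\eta=0$ and $\|\eta\|_\infty$ small, set $\tau_\varepsilon(s)=\int_0^s(1+\varepsilon\eta)$ and $\xi_\varepsilon:=\bar\xi\circ\tau_\varepsilon^{-1}$. This map preserves both endpoints, so $\Phi_t$ is unchanged, and it changes the variables to give
\[
\mathcal J_t(\xi_\varepsilon)=\int_0^t\Big(\frac{1}{1+\varepsilon\eta}\sum_i T_i(\dot{\bar\xi}_i)+(1+\varepsilon\eta)\,U(\bar\xi)\Big)ds+\Phi_t(\bar\xi(t)).
\]
Differentiating at $\varepsilon=0$ requires no differentiability of $U$, since $U$ enters only through $U(\bar\xi(s))$. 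Minimality then gives $\int_0^t\eta\,\bigl(U(\bar\xi)-\sum_iT_i(\dot{\bar\xi}_i)\bigr)=0$ for all such $\eta$. This is the Erdmann (energy) condition: $\sum_iT_i(\dot{\bar\xi}_i)-U(\bar\xi)$ is a.e.\ equal to a constant.

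The delicate point is that $\sum_iT_i(\dot{\bar\xi}_i)$ is only in $L^1$, so the differentiation under the integral must be justified. I would handle this by restricting to variations $\eta$ of the form $\eta=\mathbf 1_A-\frac{|A|}{|B|}\mathbf 1_B$ with small $\varepsilon$, and using one-sided difference quotients, as in the standard Tonelli regularity argument (Clarke--Vinter / Ambrosio--Ascenzi--Buttazzo).

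Finally, $\bar\xi$ is continuous, so $U(\bar\xi(\cdot))$ is bounded on $[0,t]$. The energy identity then gives $\sum_iT_i(\dot{\bar\xi}_i)\in L^\infty$, hence $\dot{\bar\xi}\in L^\infty$. So $\bar\xi\in W^{1,\infty}$, and by Step 1 it is a minimizer of $\mathcal J_t$ in the stated class.
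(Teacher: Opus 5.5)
Your argument is correct and complete. The paper gives no proof of this proposition. It only remarks that the problem is well posed ``under standard coercivity assumptions'', which implicitly points to the direct method. Your route adds two things that remark does not supply.

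\begin{itemize}
\item You observe that on a finite horizon, coercivity of the potential is not used. The lower bounds on $V_i$, $W$, $\Phi_t$, together with positive definiteness of the $M_i$, already give $H^1$-compactness of minimizing sequences.
\item More importantly, you address the point the paper's remark glosses over. Minimizing sequences in $W^{1,\infty}$ are only bounded in $H^1$, so the direct method yields an $H^1$ minimizer. With merely continuous potentials, the Euler--Lagrange equation is not available to upgrade its regularity.
\end{itemize}

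Your inner-variation (Erdmann) argument is the right tool for this upgrade. Because $\sum_i T_i$ is $2$-homogeneous, the reparametrization computation is explicit and gives $\sum_i T_i(\dot{\bar\xi}_i)=U(\bar\xi)+C$ a.e. Since $\bar\xi$ is continuous on $[0,t]$, this gives $\dot{\bar\xi}\in L^\infty$.

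Two simplifications are available.

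\begin{itemize}
\item Step 1 is unnecessary. Since $W^{1,\infty}\subset H^1$, an $H^1$ minimizer lying in $W^{1,\infty}$ is automatically a minimizer over the smaller class.
\item The ``delicate point'' in Step 3 is not actually delicate. Take $\eta\in L^\infty$ with $\int_0^t\eta=0$ and $|\varepsilon|\,\|\eta\|_\infty\le\frac12$. Then the difference quotient $\varepsilon^{-1}\bigl((1+\varepsilon\eta)^{-1}-1\bigr)\sum_iT_i(\dot{\bar\xi}_i)$ is dominated by $2\|\eta\|_\infty\sum_iT_i(\dot{\bar\xi}_i)\in L^1$. Dominated convergence with $\varepsilon\to 0^{\pm}$ therefore gives the first-variation identity directly. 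Choosing $\eta=\mathbf{1}_A-|A|/t$ for arbitrary measurable $A$ then shows the integrand is a.e.\ constant.
\end{itemize}

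So you do not need the one-sided difference quotients or the Clarke--Vinter / Ambrosio--Ascenzi--Buttazzo machinery here.
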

	
	Introducing the control $u=\dot{\xi}$, and fixing the horizon $T$, we can rewrite the problem as a deterministic optimal control problem with $v(x,0)=-\mathcal V(x,T)$ and 
	\begin{equation*}\label{Nbody_to_control}
		\left\{
		\begin{aligned}
			&x = \xi = (\xi_1,\dots,\xi_N), \qquad u=\dot{\xi},\\
			&\ell(x,u)
			=
			-
			\left(
			\sum_{i=1}^N \Big( V_i(x_i) + \frac12 u_i^\top M_i u_i \Big)
			+ W(x)
			\right),\\
			&f(x,u)=u,\\
			&\phi = -\Phi_T \enspace . 
		\end{aligned}
		\right.
	\end{equation*}
	Hence the $N$-body value function coincides with the solution of the associated HJB PDE~\eqref{eq:HJB},  up to a change of sign. %
	
	\subsection{Max-plus separability and tensor low-rank approximation in $N$-body systems}
	
	The running cost naturally splits into a local part and an interaction part, so 
	$
	\ell(x,u)=\ell_{\rm loc}(x,u)-W(x),
	$
	where
	\begin{equation*}\label{eq:local_lagrangian}
		\ell_{\rm loc}(x,u)
		:=
		-\sum_{i=1}^N
		\left(
		V_i(x_i)+\frac12 u_i^\top M_i u_i
		\right).
	\end{equation*}
	The local term is additively separable across the particles, whereas $W$ creates the high-dimensional coupling.
	
	\begin{proposition}\label{prop:exact_decoupled}
		Suppose that $W\equiv 0$ or that $-W$ has tropical tensor rank one, and that the terminal reward $\phi$ has tropical tensor rank one, 
		then the value function is separable, that is
		$
		\mathcal V(x,t)=\sum_{i=1}^N \mathcal V_i(x_i,t),
		$
		where $\mathcal V_i$ is the value function of the $i$-th subsystem. 
		More generally, finite maxima of separable terminal rewards are preserved by the dynamic programming.
	\end{proposition}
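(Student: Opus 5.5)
The plan is to argue directly on the Bolza functional, using the product structure of the admissible trajectory space. Assume $-W(x)=\sum_{i=1}^N w_i(x_i)$, which includes the case $W\equiv 0$, and $\phi(x)=\sum_i \phi_i(x_i)$, i.e.\ $\Phi_T=\sum_i \Phi_{T,i}$ with $\Phi_{T,i}=-\phi_i$. Then the integrand of $\mathcal J_t$ splits as
\[
\sum_{i=1}^N \Big(V_i(\xi_i)-w_i(\xi_i)+T_i(\dot\xi_i)\Big),
\]
so $\mathcal J_t(\xi)=\sum_i \mathcal J_t^i(\xi_i)$. Here $\mathcal J_t^i$ is the Bolza functional of the $i$-th subsystem, with potential $V_i-w_i$, kinetic term $T_i$ and terminal cost $\Phi_{T,i}$. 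This $\mathcal J_t^i$ is exactly what defines $\mathcal V_i(x_i,t)$.

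The admissible set $\{\xi\in W^{1,\infty}([0,t];(\R^d)^N),\ \xi(0)=x\}$ is the Cartesian product of the sets $\{\xi_i\in W^{1,\infty}([0,t];\R^d),\ \xi_i(0)=x_i\}$. The infimum of a sum of functions of independent variables over a product set is the sum of the infima. This gives $\mathcal V(x,t)=\sum_i\mathcal V_i(x_i,t)$. Both inequalities are elementary. The bound $\ge$ holds because each $\mathcal J_t^i(\xi_i)\ge\mathcal V_i(x_i,t)$. The bound $\le$ follows by taking $\varepsilon/N$-optimal $\xi_i$ and concatenating them into one $\xi$.

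The one point needing care is the convention $\infty-\infty$. Terminal rewards valued in $\Rm$ may make some $\mathcal V_i=+\infty$ (in the minimization convention), and the lower bounds of each piece are not automatic from the hypotheses on the total potential. To handle this, I would note that each $\mathcal J^i_t>-\infty$ whenever the pieces are bounded below. Otherwise I would adopt the convention that sums containing $+\infty$ equal $+\infty$, under which the product argument still goes through. If desired, Proposition~\ref{prop:existence_many_body} applied subsystemwise also gives minimizers and shows the infima are attained. The splitting itself, however, does not need attainment.

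For the ``more generally'' claim, let $\phi=\bigvee_{k=1}^r\phi^k$ with each $\phi^k$ of tropical tensor rank one. I would use the max-plus linearity of the Lax--Oleinik semigroup recalled in the Preliminaries, namely $S^\tau(\phi\oplus\psi)=S^\tau\phi\oplus S^\tau\psi$. This linearity can also be verified directly by exchanging two suprema,
\[
\sup_{u}\max_k\{\cdots+\phi^k\}=\max_k\sup_u\{\cdots+\phi^k\}.
\]
As a result, the value function with reward $\phi$ is $\bigvee_k v^k$, where $v^k$ is the value for reward $\phi^k$. By the first part, each $v^k$ is separable, so $v$ has tropical tensor rank at most $r$ for all times. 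The same argument applies step by step to the discrete Bellman recursion. I expect no real obstacle beyond the bookkeeping of the sign change $v=-\mathcal V$ and of the $\pm\infty$ conventions.
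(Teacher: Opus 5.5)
Your proof is correct and matches what the paper intends. The paper writes out no proof and only calls the result elementary; the natural argument is yours: $\mathcal J_t$ splits as $\sum_i \mathcal J^i_t(\xi_i)$ over the product of componentwise trajectory spaces, and finite maxima of separable rewards are handled by max-plus linearity (exchange of suprema), which the preliminaries recall for this purpose. Your $\pm\infty$ bookkeeping is also sound: for each fixed Lipschitz $\xi_i$ the integral part of $\mathcal J^i_t(\xi_i)$ is finite, because the $w_i$ inherit continuity from $W$, and the terminal part lies in $\R\cup\{+\infty\}$. Hence $\mathcal J^i_t(\xi_i)>-\infty$, and with the upper-addition convention the infimum of the sum equals the sum of the infima in all cases.
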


	
	
	The elementary \Cref{prop:exact_decoupled} shows that tropical tensor low-rank, in particular rank-one, representations are exact in the fully decoupled case. In general, however, the set of tropical tensors with rank $r$ is not preserved by repeated time propagation. If $-W$ is approximated by a supremum of $R$ separable terms and the terminal reward has tropical tensor rank $r_0$, then after $N$ time steps a direct dynamic programming scheme typically produces a supremum of at most $r_0R^N$ separable terms. Hence direct max-plus propagation suffers from exponential rank growth with the time horizon. 
	This motivates the trajectory-based update strategy of the next section.
	
	\section{A trajectory-based iterative tropical tensor low-rank approximation}\label{section-algo}
	
	\subsection{Tropical tensor low-rank approximation of semiconvex value functions}
	
	We first discretize the horizon by a time step $h>0$, 
	\[
	t_k := kh, \ k=0,1,\dots,K \text{ and } Kh=T.
	\]
	Let $v_k^h$ be the discrete-time value functions, defined by the Bellman equation
\begin{subequations}
	\begin{equation}\label{eq:bellman_equation}
	v_k^h=T_h[v_{k+1}^h],\; k=0,\dots,K-1, \qquad v_K^h=\phi\enspace ,
	\end{equation}
where $T_h$ is the 
discrete Bellman operator acting on the set of bounded functions $\psi:X\to\R$:
  \begin{gather}
    (T_h\psi)(x):=\sup_{u\in U}\Bigl\{h\,\ell(x,u)+\psi(F_h(x,u))\Bigr\}\ ,
    \label{eq:bellman_short}\\
        \text{with} \; F_h(x,u)=x+h f(x,u), \; x\in X,\; u\in U\enspace .
        \end{gather}
\end{subequations}

	The algorithm relies on the existence of global lower supporting quadratic functions, so that we make the following assumption. 
	
	\begin{assumption}\label{ass:traj_algo}
		We assume that:
		\begin{enumerate}
			\item[(i)] the control set $U\subset \R^m$ is nonempty and compact;
			\item[(ii)] the one-step dynamics is affine, i.e., 
			\[
			F_h(x,u)=A_hx+B_hu+b_h;
			\]
			\item[(iii)] the terminal reward $\phi$ is continuous and $c_K$-semiconvex;
			\item[(iv)] there exists $\gamma_h\ge 0$ such that, for every $u\in U$, the map $x\mapsto h\,\ell(x,u)$ is continuous and $\gamma_h$-semiconvex w.r.t.\ $x$,  uniformly in $u$.
		\end{enumerate}
	\end{assumption}
	
	Under this assumption, semiconvexity is propagated by the Bellman operator $T_h$ given in \eqref{eq:bellman_short}. 
	
	\begin{proposition}\label{prop:bellman_preserves_semiconvexity}
	  Suppose \Cref{ass:traj_algo} holds. 
		Define recursively
		\begin{equation*}\label{eq:curvature_recursion}
			c_k := \gamma_h + \|A_h\|^2 c_{k+1},
			\ k=K-1,\dots,0.
		\end{equation*}
		Then, $v_k^h$ is $c_k$-semiconvex, for every $k$.
	\end{proposition}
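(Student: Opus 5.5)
We argue by backward induction on $k$, with semiconvexity understood in the usual sense: $\psi$ is $c$-semiconvex if $x\mapsto \psi(x)+\frac{c}{2}\|x\|^2$ is convex. For $k=K$ we have $v_K^h=\phi$, which is $c_K$-semiconvex by \Cref{ass:traj_algo}(iii). For the inductive step we assume $v_{k+1}^h$ is $c_{k+1}$-semiconvex. We then write $v_k^h=T_h[v_{k+1}^h]$ as a pointwise supremum over $u\in U$ of the functions
\[
g_u(x):=h\,\ell(x,u)+v_{k+1}^h(A_hx+B_hu+b_h).
\]
Since a pointwise supremum of $c$-semiconvex functions with a common constant $c$ is again $c$-semiconvex, it suffices to show that each $g_u$ is $c_k$-semiconvex with $c_k$ independent of $u$. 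The supremum is finite because $U$ is compact and the data are continuous and bounded, so $v_k^h$ is real valued.

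The first term $x\mapsto h\ell(x,u)$ is $\gamma_h$-semiconvex uniformly in $u$ by (iv). For the second term we use the following lemma: if $\psi$ is $c$-semiconvex and $L(x)=Ax+w$ is affine, then $\psi\circ L$ is $c\|A\|^2$-semiconvex. To prove it, write $\psi=\chi-\frac c2\|\cdot\|^2$ with $\chi$ convex. Then
\[
\psi(Ax+w)+\tfrac{c\|A\|^2}{2}\|x\|^2=\chi(Ax+w)+\tfrac c2\bigl(\|A\|^2\|x\|^2-\|Ax+w\|^2\bigr).
\]
The function $\chi\circ L$ is convex. The quadratic part has Hessian $c(\|A\|^2I-A^\top A)\succeq 0$ plus affine terms, so it is convex as well, and the sum is convex. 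Applying the lemma with $A=A_h$ and $w=B_hu+b_h$, the second term of $g_u$ is $\|A_h\|^2c_{k+1}$-semiconvex, again uniformly in $u$.

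Semiconvexity constants add under sums, since the added quadratics add. Hence $g_u$ is $(\gamma_h+\|A_h\|^2c_{k+1})=c_k$-semiconvex, and taking the supremum over $u$ completes the induction. We may also note, for later use, that continuity of $v_k^h$ follows from compactness of $U$ together with uniform continuity on compacts. Semiconvex functions are locally Lipschitz where finite in any case, so continuity is not needed for this claim.

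The step that needs the most care is the composition lemma. The bound $\|Ax\|\le\|A\|\|x\|$ with the operator norm is exactly what makes $\|A\|^2I-A^\top A$ positive semidefinite. A second point to watch is that if the state space $X$ is a bounded domain rather than $\R^n$, convexity must be read on a convex domain invariant under $F_h$. We would assume this implicitly, or extend all functions to $\R^n$.
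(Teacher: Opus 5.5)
Your proof is correct and follows the same route as the paper. You use backward induction, show that each $x\mapsto h\,\ell(x,u)+v_{k+1}^h(F_h(x,u))$ is $(\gamma_h+\|A_h\|^2c_{k+1})$-semiconvex uniformly in $u$ (by additivity and composition with the affine map $F_h(\cdot,u)$), and conclude because a pointwise supremum of functions with a common semiconvexity constant keeps that constant. You also supply details the paper leaves implicit: the composition lemma via $\|A_h\|^2 I-A_h^\top A_h\succeq 0$ (which needs $c_{k+1}\ge 0$, automatic here since $\gamma_h,c_K\ge 0$), and the caveat that on a bounded $X$ the argument needs a convex, $F_h$-invariant domain.
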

\begin{proof}
		The proof is by backward induction. 
		If $v_{k+1}^h$ is $c_{k+1}$-semiconvex, then for each fixed control $u$,		\[
		x\mapsto h\,\ell(x,u)+v_{k+1}^h(F_h(x,u))
		\]
		is $\gamma_h+\|A_h\|^2c_{k+1}$-semiconvex, because $x\mapsto h\,\ell(x,u)$ is uniformly semiconvex and $F_h(\cdot,u)$ is affine. 
		Taking the supremum over $u\in U$ preserves semiconvexity.
	\end{proof}
	
	The next proposition yields global lower concave quadratic supporting functions.
	
	\begin{proposition}\label{prop:global_semiconvex_support}
		Let $f:\R^n\to \R$ be a $c$-semiconvex function. 
		Then, for every $a\in \R^n$, there exists $p\in \R^n$ such that
		\begin{equation*}\label{eq:generic_support}
			w(x):=f(a)+p\cdot (x-a)-\frac c2\|x-a\|^2
		\end{equation*}
		satisfies
		\(
		w(x)\le f(x), \ \forall x\in \R^n,
		\ \text{and}\
		w(a)=f(a) \ .
		\)
	\end{proposition}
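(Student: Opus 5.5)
The idea is to reduce the statement to the existence of a subgradient of a finite convex function. By definition of $c$-semiconvexity (the convention used throughout, e.g.\ in \Cref{prop:bellman_preserves_semiconvexity}), the function
\[
g(x):=f(x)+\frac c2\|x-a\|^2
\]
is convex on $\R^n$. Since the quadratic $\frac c2\|x\|^2$ and $\frac c2\|x-a\|^2$ differ only by an affine function of $x$, it does not matter which centre is used. This is because convexity is invariant under adding affine functions.

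Since $f$ is real-valued, $g$ is a finite convex function on all of $\R^n$. Hence $g$ is continuous, and its subdifferential $\partial g(a)$ is nonempty at every point, in particular at the interior point $a$. This is the standard supporting hyperplane theorem applied to the epigraph of $g$. The non-vertical character of the hyperplane comes from $a$ lying in the interior of the domain, which here is the whole of $\R^n$. I would pick $p\in\partial g(a)$.

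The subgradient inequality then gives, for all $x\in\R^n$,
\[
g(x)\ge g(a)+p\cdot(x-a)=f(a)+p\cdot(x-a).
\]
Subtracting $\frac c2\|x-a\|^2$ from both sides yields
\[
f(x)\ge f(a)+p\cdot(x-a)-\frac c2\|x-a\|^2=w(x).
\]
Equality $w(a)=f(a)$ is immediate by evaluating at $x=a$.

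The only point needing care is the nonemptiness of $\partial g(a)$. It relies on $f$ being finite everywhere, which the statement grants since $f:\R^n\to\R$. No compactness or further regularity is needed. If semiconvexity were instead defined via the midpoint inequality $f(x+y)+f(x-y)-2f(x)\ge -c\|y\|^2$, I would first observe that this makes $g$ midpoint convex. I would then need continuity to conclude that $g$ is convex. Continuity is automatic for the functions in \Cref{ass:traj_algo}, and more generally a midpoint convex function that is locally bounded above is convex.
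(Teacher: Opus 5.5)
Your proof is correct, and it is the standard argument. The paper states this proposition without proof, but your reasoning is exactly the justification it implicitly relies on: $f+\frac c2\|\cdot-a\|^2$ is a finite convex function (it differs from $f+\frac c2\|\cdot\|^2$ by an affine term), it has a nonempty subdifferential at $a$, and rearranging the subgradient inequality for any $p\in\partial g(a)$ gives $w\le f$ with equality at $a$. Your side remark is also accurate: under a purely midpoint-type definition of semiconvexity you would additionally need continuity or local upper boundedness, but this is unnecessary under the convex-sum convention the paper uses when propagating semiconvexity.
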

	
	We therefore approximate the discrete value functions by finite suprema of concave quadratic supporting functions,
	\begin{equation}\label{eq:maxplus_quad_approx}
		\widehat v_k^{h,m}(x)
		=
		\bigvee_{j=1}^{r_k^{(m)}} w_{k,j}^{h,m}(x) \ ,
	\end{equation}
	where $m\ge 0$ is the outer iteration index and each basis function has the form
	\begin{equation}\label{eq:quadratic_basis_section}
		w_{k,j}^{h,m}(x)
		=
		\beta_{k,j}^{h,m}
		+
		p_{k,j}^{h,m}\cdot (x-a_{k,j}^{h,m})
		-
		\frac{c_k}{2}\|x-a_{k,j}^{h,m}\|^2 .
	\end{equation}
	In the $N$-body setting $x=(x_1,\dots,x_N)\in(\R^d)^N$, the quadratic term is additively separable across the particles, hence each basis function is a tropical tensor of rank one.
	
	\subsection{The iterative procedure} 
	
	The method alternates a backward enrichment step and a forward trajectory update. 
	It starts from a prescribed initial point $x_0$, an admissible reference trajectory
	\[
	x^{(0)}=(x_0^{(0)},\dots,x_K^{(0)}),
	\ x_0^{(0)}=x_0,
	\]
	and initial lower bounds
	\[
	\widehat v_k^{h,0}\le v_k^h,
	\ k=0,\dots,K.
	\]
	A simple choice is to take a nominal trajectory generated by a constant or zero control and to initialize $\widehat v_k^{h,0}$ by a certified constant lower bound.
	
	The role of the reference trajectory is only to indicate where the next supporting functions should be generated, so it does not need to be close to an optimal trajectory. 
	Likewise, the initial lower bounds may be very coarse, provided that they are certified (one can take $-\infty$).
	This is important in practice, because it allows the algorithm to start from inexpensive surrogates and then improve them monotonically.
	
        We then construct, at each outer iteration $m\ge 1$,
        an approximation of the value function of the form 
        \eqref{eq:maxplus_quad_approx},         as follows.
        To simplify the presentation, we assume that $\phi=\widehat v_K^{h,0}$ is already in the form \eqref{eq:maxplus_quad_approx}.
        For $k=K-1,\dots,0$, we first compute a greedy backward control
	\begin{equation}\label{eq:local_greedy_control_backward}
		\begin{aligned}
		u_k^{(m)}(w)\in \argmax_{u\in U}
		&\left\{
		h\,\ell(x_k^{(m-1)},u) \right. \\ 
		& \quad  \left.+
		w(F_h(x_k^{(m-1)},u))
		\right\}.
		\end{aligned}
	\end{equation}
        for each function $w=w_{k+1,j}^{h,m-1}$, with $j\leq r_{k+1}^{(m-1)}$, then deduce
     $u_k^{(m)}=u_k^{(m)}(\widehat v_{k+1}^{h,m-1})$.
	We then select an active basis function $w_{k+1,j}^{h,m-1}$ of $\widehat v_{k+1}^{h,m-1}$ at the successor state $x_{k+1}=F_h(x_k^{(m-1)},u_k^{(m)})$ meaning
        $w_{k+1,j}^{h,m-1}(x_{k+1})=\widehat v_{k+1}^{h,m-1}(x_{k+1})$ 
        and setting $\iota_k^{(m)}=j$, we define the one-step update
	\begin{equation*}\label{eq:one_step_profile}
		G_k^{(m)}(x)
		:=
		h\,\ell(x,u_k^{(m)})
		+
		w_{k+1,\iota_k^{(m)}}^{h,m-1}(F_h(x,u_k^{(m)})).
	\end{equation*}
	By \Cref{prop:global_semiconvex_support}, there exists a concave quadratic supporting function
	\begin{equation}\label{eq:new_support_algo}
		\begin{aligned}
		&w(x)
		=
		G_k^{(m)}(x_k^{(m-1)}) \\
		&\quad +
		p_k^{h,m}\cdot(x-x_k^{(m-1)}) 
		-
		\frac{c_k}{2}\|x-x_k^{(m-1)}\|^2
		\end{aligned}
	\end{equation}
	such that $w\le G_k^{(m)}$ and equality holds at $x_k^{(m-1)}$. 
	If $w$ does not belong to the set of $w_{k,j}^{h,m-1}$, with $j\leq r_{k}^{(m-1)}$, 
        we set $r_k^{(m)}= r_k^{(m-1)}+1$, and
        $w_{k,r_k^{(m)}}^{h,m}=w$ otherwise $r_k^{(m)}= r_k^{(m-1)}$. Then, 
        the approximation of the value function at time $k$ is updated by
	\begin{equation}\label{eq:enrichment_update}
		\widehat v_k^{h,m}
		=
		\widehat v_k^{h,m-1}\vee w.
	\end{equation}
	
	Once the backward update is complete, we recompute the reference trajectory by a forward greedy pass:
	\begin{equation}\label{eq:local_greedy_control_forward}
		u_k^{(m)}
		\in
		\argmax_{u\in U}
		\left\{
		h\,\ell(x_k^{(m)},u)
		+
		\widehat v_{k+1}^{h,m}\big(F_h(x_k^{(m)},u)\big)
		\right\},
	\end{equation}
	followed by
	\[
	x_{k+1}^{(m)}=F_h(x_k^{(m)},u_k^{(m)}),
	\qquad k=0,\dots,K-1.
	\]

	\begin{algorithm}[t]
		\caption{Trajectory-based iterative tropical tensor low-rank approximation}
		\label{alg:traj_tropical}
		\begin{algorithmic}[1]
		  \Require Initial point $x_0$, initial trajectory $x^{(0)}$, initial lower bounds $(\widehat v_k^{h,0})_{k=0}^K$ s.t.\ $\widehat v_K^{h,0}=\phi$, number of iterations $M$
			\For{$m=1,\dots,M$}
			\For{$k=K-1,\dots,0$}
			\State Compute $u_k^{(m)}$ by \eqref{eq:local_greedy_control_backward}
			\State Select an active basis function of $\widehat v_{k+1}^{h,m-1}$
			\State Build a quadratic supporting function $w$ of $G_k^{(m)}$
			\State Update $\widehat v_k^{h,m}\gets \widehat v_k^{h,m-1}\vee w$
			\EndFor
			\State Set $x_0^{(m)}\gets x_0$
			\For{$k=0,\dots,K-1$}
			\State Compute $u_k^{(m)}$ by \eqref{eq:local_greedy_control_forward}
			\State Propagate $x_{k+1}^{(m)}\gets F_h(x_k^{(m)},u_k^{(m)})$
			\EndFor
			\EndFor
		\end{algorithmic}
	\end{algorithm}
		The resulting scheme is summarized in~\Cref{alg:traj_tropical}. 
	The basic structural properties of this algorithm are the following.
	
	\begin{proposition}\label{prop:monotone_lower_bounds}
		Suppose that \Cref{ass:traj_algo} holds and that
		\[
		\widehat v_k^{h,0}\le v_k^h,
		\ k=0,\dots,K.
		\]
		Then, for every  $m\ge 1$ and every $k=0,\dots,K$,
		\begin{equation*}\label{eq:monotone_bounds}
			\widehat v_k^{h,m-1}\le \widehat v_k^{h,m}\le v_k^h .
		\end{equation*}
		In particular, the sequence $\big(\widehat v_k^{h,m}\big)_{m\ge 0}$ is monotone nondecreasing and remains a family of lower bounds for $v_k^h$. 
		Moreover,
		\begin{equation*}\label{rank_increase}
			r_k^{(m)}\le r_k^{(0)}+m,
			\ k=0,\dots,K,
		\end{equation*}
		so the tropical tensor rank grows at most linearly with the number of outer iterations.
	\end{proposition}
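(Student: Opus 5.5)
The left inequality $\widehat v_k^{h,m-1}\le \widehat v_k^{h,m}$ is immediate from the update \eqref{eq:enrichment_update}, since $\widehat v_k^{h,m}=\widehat v_k^{h,m-1}\vee w$. For $k=K$ no update is performed, so $\widehat v_K^{h,m}=\phi=v_K^h$ for all $m$. The content of the statement is therefore the upper bound $\widehat v_k^{h,m}\le v_k^h$. I will prove it by a double induction: an outer induction on $m$ (the case $m=0$ is the hypothesis), and, for fixed $m\ge1$, a backward induction on $k$ from $K-1$ down to $0$.

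Fix $m\ge1$ and assume $\widehat v_j^{h,m-1}\le v_j^h$ for all $j$. Take $k\le K-1$. Since $\widehat v_k^{h,m}=\widehat v_k^{h,m-1}\vee w$, it suffices to show $w\le v_k^h$, where $w$ is the new quadratic function built in \eqref{eq:new_support_algo}. By construction (via \Cref{prop:global_semiconvex_support}) we have $w\le G_k^{(m)}$ everywhere. Moreover, $w_{k+1,\iota_k^{(m)}}^{h,m-1}$ is one of the terms of the supremum \eqref{eq:maxplus_quad_approx} defining $\widehat v_{k+1}^{h,m-1}$, so $w_{k+1,\iota_k^{(m)}}^{h,m-1}\le \widehat v_{k+1}^{h,m-1}\le v_{k+1}^h$. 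Hence, for every $x$,
\begin{align*}
G_k^{(m)}(x)&=h\,\ell(x,u_k^{(m)})+w_{k+1,\iota_k^{(m)}}^{h,m-1}(F_h(x,u_k^{(m)}))\\
&\le h\,\ell(x,u_k^{(m)})+v_{k+1}^h(F_h(x,u_k^{(m)}))\le (T_h v_{k+1}^h)(x)=v_k^h(x),
\end{align*}
where the last inequality holds because $u_k^{(m)}\in U$ is admissible in the supremum \eqref{eq:bellman_short}. This step uses only the induction hypothesis at level $m-1$, so the backward induction on $k$ is not actually needed. Monotonicity of $T_h$ is the key ingredient, and it is elementary.

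The main obstacle is checking that \Cref{prop:global_semiconvex_support} applies to $G_k^{(m)}$ with constant $c_k$. This requires $G_k^{(m)}$ to be $c_k$-semiconvex. The term $x\mapsto h\ell(x,u_k^{(m)})$ is $\gamma_h$-semiconvex by \Cref{ass:traj_algo}(iv). The basis function $w_{k+1,j}^{h,m-1}$ has the form \eqref{eq:quadratic_basis_section} with curvature $-c_{k+1}$, so it is $c_{k+1}$-semiconvex. Composing it with the affine map $F_h(\cdot,u)$ gives a $\|A_h\|^2c_{k+1}$-semiconvex function, exactly as in the proof of \Cref{prop:bellman_preserves_semiconvexity}. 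So $G_k^{(m)}$ is $c_k$-semiconvex. The basis functions of $\phi$ have curvature $c_K$ by the standing simplification, and the initial bounds are assumed to lie in the class \eqref{eq:maxplus_quad_approx}; if they are $-\infty$ or constants, they are trivially $c$-semiconvex.

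The rank bound follows by counting. At each outer iteration and each $k\le K-1$, at most one new function is appended, so $r_k^{(m)}\le r_k^{(m-1)}+1$, and induction gives $r_k^{(m)}\le r_k^{(0)}+m$; for $k=K$ the rank is constant. In the $N$-body setting each $w$ has tropical tensor rank one, so the tropical rank of $\widehat v_k^{h,m}$ is at most $r_k^{(m)}$.
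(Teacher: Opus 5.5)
Your proof is correct and follows essentially the same route as the paper. The selected basis function is bounded by $v_{k+1}^h$, so $G_k^{(m)}$ is dominated by the Bellman update at the particular control $u_k^{(m)}$, hence by $v_k^h$; then $w\le G_k^{(m)}$ and the max-update give both inequalities, and the rank bound comes from the same counting argument. Your explicit induction on $m$ and your check that $G_k^{(m)}$ is $c_k$-semiconvex (so that \Cref{prop:global_semiconvex_support} actually yields $w$ with curvature $c_k$) make precise points that the paper's proof leaves implicit.
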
 
	
	\begin{proof}
		Each selected basis function of $\widehat v_{k+1}^{h,m-1}$ is bounded above by $v_{k+1}^h$, hence the one-step profile $G_k^{(m)}$ is bounded above by the Bellman update corresponding to the specific control $u_k^{(m)}$, and therefore by $v_k^h$. 
		Since the new supporting function satisfies $w_k^{h,m}\le G_k^{(m)}$, we obtain $w_k^{h,m}\le v_k^h$. 
		The update \eqref{eq:enrichment_update} then immediately yields
		\[
		\widehat v_k^{h,m-1}\le \widehat v_k^{h,m}\le v_k^h.
		\]
		The rank estimate follows because each backward sweep adds at most one basis function per time step.
	\end{proof}
	
	\Cref{prop:monotone_lower_bounds} is the key reason for using lower supporting functions, that is the approximation improves monotonically while preserving a certified bound. 
	This also avoids the uncontrolled rank explosion of naive propagation.

	\begin{remark}
		When $\ell$ and the active basis function are differentiable, the vector $p_k^{h,m}$ in \eqref{eq:new_support_algo} can be chosen as a gradient of the one-step update $G_k^{(m)}$ at the touching point. 
		For affine dynamics $F_h(x,u)=A_hx+B_hu+b_h$, this yields an explicit expression obtained by the chain rule, which makes the construction of supporting functions inexpensive once the active basis function is known.
	\end{remark}
	
	\subsection{Tropical tensor low-rank approximation in the optimal control of $N$-body systems}
	
	We now return to the optimal control of $N$-body systems.
	Since the state variable has the product form
	\[
	x=(x_1,\dots,x_N)\in (\R^d)^N \ ,
	\]
	the quadratic basis function \eqref{eq:quadratic_basis_section} decomposes as
	\[
	\begin{aligned}
	w_{k,j}^{h,m}(x)
	=
	\beta_{k,j}^{h,m}
	+
	\sum_{i=1}^N
	\big( &
	p_{k,j,i}^{h,m}\cdot (x_i-a_{k,j,i}^{h,m}) \\
	&-
	\frac{c_k}{2}\|x_i-a_{k,j,i}^{h,m}\|^2
	\big) \ .
	\end{aligned}
	\]
    So every generated supporting function is additively separable and therefore has tropical tensor rank one. 
	It follows that
	\[
	\widehat v_k^{h,m}(x)
	=
	\bigvee_{j=1}^{r_k^{(m)}} w_{k,j}^{h,m}(x)
	\]
	is a tropical tensor low-rank approximation of the value function.  
	This point of view also leads to a favorable complexity estimate. 
	For a state dimension \(n=Nd\), storing or evaluating one basis function requires \(O(Nd)\) operations. 
	Hence, if \(\widehat v_k^{h,m}\) contains \(r_k^{(m)}\) basis functions, one evaluation of \(\widehat v_k^{h,m}\) costs \(O(r_k^{(m)}Nd)\).

\subsection{A Glimpse of the Convergence Result}
	
	We briefly discuss the convergence of the trajectory-based tropical approximation. 
	The key point is that the algorithm generates a monotone sequence of lower bounds of the true value function, and that, under additional regularity and small-time assumptions, the limiting approximation is exact along every limiting trajectory produced by the scheme.
	
	
	We assume throughout this section that $X$ is compact and convex, $U$ is compact, $F_h(X\times U)\subset X$, the maps $F_h$ and $\ell$ are continuous and Lipschitz continuous in the state variable, and that the initial approximation $\widehat v^{h,0}$ is Lipschitz continuous and satisfies the Bellman subsolution property: 
	\[
	\widehat v_k^{h,0}\le T_h[\widehat v_{k+1}^{h,0}],\quad k=0,\dots,K-1\enspace,
	\]
        where $T_h$ is the discrete Bellman operator given in \eqref{eq:bellman_short}.
	Under these assumptions, the monotonicity of the Bellman operator $T_h$ and the construction of the algorithm imply that the sequence $(\widehat v_k^{h,m})_{m\ge0}$ remains monotone nondecreasing and uniformly Lipschitz continuous.
	
	\begin{theorem}\label{thm:short_convergence}
		For every $k=0,\dots,K$, the sequence $(\widehat v_k^{h,m})_{m\ge0}$ converges uniformly on $X$ to a Lipschitz continuous function $v_k^{h,*}$ satisfying
		$
		\widehat v_k^{h,m}\le v_k^{h,*}\le v_k^h  \text{ on }X,
		$ 
		and
		$
		v_k^{h,*}\le T_h[v_{k+1}^{h,*}], k=0,\dots,K-1.
		$ 
		Moreover, let $(x^{(m)})_{m\ge0}$ be the sequence of forward trajectories $x^{(m)}=(x^{(m)}_k)_{k=0,\ldots, K}$ generated by the algorithm. For every convergent subsequence
		\(
		x^{(m_j)}\to x^*:=(x_k^*)_{k=0,\ldots, K}\in X^{K+1},
		\)
		there exists a subsequence and
                controls $u_k^*\in U$, $k=0,\dots,K-1$,
                such that $x_{k+1}^*=F_h(x_k^*,u_k^*)$,
		\[
		u_k^*\in \argmax_{u\in U}\Bigl\{h\,\ell(x_k^*,u)+v_{k+1}^{h,*}(F_h(x_k^*,u))\Bigr\} \ ,
		\]
		and 
		\begin{equation}\label{eq:bellman_limit_traj_short}
			v_k^{h,*}(x_k^*)
			=
			h\,\ell(x_k^*,u_k^*)+v_{k+1}^{h,*}(x_{k+1}^*) \ .
		\end{equation}
	\end{theorem}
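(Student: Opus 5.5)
Given the standing assumptions just stated (the iterates $\widehat v_k^{h,m}$ are monotone nondecreasing in $m$, bounded above by $v_k^h$ by \Cref{prop:monotone_lower_bounds}, and uniformly Lipschitz with a common constant $L_k$), the first part is a compactness argument. The pointwise limit $v_k^{h,*}:=\sup_m \widehat v_k^{h,m}$ exists and satisfies $\widehat v_k^{h,m}\le v_k^{h,*}\le v_k^h$. It is $L_k$-Lipschitz as a supremum of $L_k$-Lipschitz functions. Since $X$ is compact, Dini's theorem then upgrades the monotone pointwise convergence of continuous functions to a continuous limit into uniform convergence on $X$. For the subsolution inequality, I will first show by induction on $m$ that $\widehat v_k^{h,m}\le T_h[\widehat v_{k+1}^{h,m}]$. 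For $m=0$ this is the hypothesis. For the new piece $w$, we have $w\le G_k^{(m)}\le T_h[w_{k+1,\iota}^{h,m-1}]\le T_h[\widehat v_{k+1}^{h,m-1}]\le T_h[\widehat v_{k+1}^{h,m}]$, using that $T_h$ is monotone. The old part is handled by the induction hypothesis together with monotonicity of $T_h$ in $m$. Passing to the limit via $T_h[\widehat v_{k+1}^{h,m}]\le T_h[v_{k+1}^{h,*}]$ gives $v_k^{h,*}\le T_h[v_{k+1}^{h,*}]$.

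For the trajectory statement, take $x^{(m_j)}\to x^*$ with forward controls $u_k^{(m_j)}$. By compactness of $U$ I extract a further subsequence with $u_k^{(m_j)}\to u_k^*$ for every $k$. Continuity of $F_h$ then gives $x_{k+1}^*=F_h(x_k^*,u_k^*)$. The argmax property will pass to the limit as follows. For each $u$,
\[
h\ell(x_k^{(m_j)},u_k^{(m_j)})+\widehat v_{k+1}^{h,m_j}(x_{k+1}^{(m_j)})\ge h\ell(x_k^{(m_j)},u)+\widehat v_{k+1}^{h,m_j}(F_h(x_k^{(m_j)},u)).
\]
Uniform convergence of $\widehat v_{k+1}^{h,m_j}$ combined with continuity of $\ell$ and $F_h$ lets us take limits on both sides. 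This shows $u_k^*$ is optimal for $v_{k+1}^{h,*}$ at $x_k^*$, i.e.\ $h\ell(x_k^*,u_k^*)+v_{k+1}^{h,*}(x_{k+1}^*)=T_h[v_{k+1}^{h,*}](x_k^*)$.

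It remains to prove \eqref{eq:bellman_limit_traj_short}. The subsolution property gives ``$\le$'', so the main obstacle is the reverse inequality $v_k^{h,*}(x_k^*)\ge T_h[v_{k+1}^{h,*}](x_k^*)$. This is where the backward enrichment at iteration $m_j+1$ is used: it is performed at the points $x_k^{(m_j)}$. The backward pass at iteration $m+1$ treats $k=K-1,\dots,0$ in order, but the greedy control \eqref{eq:local_greedy_control_backward} uses $\widehat v_{k+1}^{h,m}$, the approximation from the previous outer iteration. The new piece $w$ satisfies $w(x_k^{(m)})=G_k^{(m+1)}(x_k^{(m)})$. Because an active basis function is chosen at the greedy successor, $G_k^{(m+1)}(x_k^{(m)})=\max_u\{h\ell(x_k^{(m)},u)+\widehat v_{k+1}^{h,m}(F_h(x_k^{(m)},u))\}=T_h[\widehat v_{k+1}^{h,m}](x_k^{(m)})$. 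Therefore
\[
v_k^{h,*}(x_k^{(m_j)})\ge \widehat v_k^{h,m_j+1}(x_k^{(m_j)})\ge T_h[\widehat v_{k+1}^{h,m_j}](x_k^{(m_j)}).
\]
Letting $j\to\infty$ is legitimate because $v_k^{h,*}$ is Lipschitz and $T_h[\widehat v_{k+1}^{h,m}]\to T_h[v_{k+1}^{h,*}]$ uniformly, since $T_h$ is sup-norm nonexpansive. This yields $v_k^{h,*}(x_k^*)\ge T_h[v_{k+1}^{h,*}](x_k^*)$ and closes the argument. The delicate point I would check carefully is that the exactness of the supporting function at the touching point indeed gives $G_k^{(m+1)}$ equal to the full Bellman maximum, not just one basis function's contribution. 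This holds because the active index is selected at the maximizing successor, where that basis function coincides with $\widehat v_{k+1}^{h,m}$.
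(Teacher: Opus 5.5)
Your proposal is correct and follows the same route as the paper's sketch. The steps match: monotone bounded convergence is upgraded to uniform convergence by compactness (you use Dini, the paper invokes the uniform Lipschitz bounds); the subsolution inequality is passed to the limit, and you usefully make explicit the induction showing each enrichment preserves it; the forward controls are handled by compactness of $U$ and stability of maximizers; and the backward-step contact relation $\widehat v_k^{h,m+1}(x_k^{(m)})\ge T_h[\widehat v_{k+1}^{h,m}](x_k^{(m)})$ is passed to the limit to get the reverse inequality. The one detail left implicit is that evaluating at the moving points $x_k^{(m_j)}$ also needs continuity of $T_h[v_{k+1}^{h,*}]$ at $x_k^*$, which holds because $U$ is compact and $\ell$, $F_h$, $v_{k+1}^{h,*}$ are continuous.
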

	
	\begin{skproof}
          The proof is similar to the ones showing the  convergence of SDDP algorithm, see~\cite{akian2025stochastic}.
		The monotonicity of the algorithm yields $\widehat v_k^{h,m}\le \widehat v_k^{h,m+1}\le v_k^h$, hence the pointwise monotone convergence of $\widehat v_k^{h,m}$. Uniform Lipschitz bounds imply compactness, and the monotone limit is therefore uniform on $X$. Passing to the limit in the Bellman subsolution inequality gives
		$
		v_k^{h,*}\le T_h[v_{k+1}^{h,*}].
		$
		Finally, compactness of $U$ and stability of maximizers yield the convergence of a subsequence of controls, while the contact relation enforced by the backward step passes to the limit and gives \eqref{eq:bellman_limit_traj_short}.
	\end{skproof}

	To identify this limit along a trajectory, one may use the following result.
        \begin{theorem}\label{thm:short_exactness}
          Assume in addition that, 
          for each $k=0,\dots,K$, the function $v_k^h$ is semiconcave and that the associated one-step Bellman maximization is uniformly strongly concave in the control variable. Then, 
          the sequence $(x^{(m)})_{m\ge0}$ converges towards the unique optimal trajectory $x^*=(x_k^*)_{0\le k\le K}$ and we have 
		\[
		v_k^{h,*}(x_k^*)=v_k^h(x_k^*),\qquad k=0,\dots,K.
		\]
	\end{theorem}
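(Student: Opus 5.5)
We argue by forward induction along the limit trajectory, combining \Cref{thm:short_convergence} with a tangency argument at the points $x_k^*$. The heart of the matter is the standard SDDP-type mechanism. At a limit point $x_k^*$ of the reference trajectories, the backward step has added supporting functions whose value at $x_{k}^{(m-1)}$ equals a one-step Bellman value computed with $\widehat v_{k+1}^{h,m-1}$. If by induction $\widehat v_{k+1}^{h,m-1}$ becomes exact at $x_{k+1}^*$, the limit $v^{h,*}_k$ should become exact at $x_k^*$. The induction must therefore run \emph{backward} in $k$ for exactness, from $v_K^{h,*}=\phi=v_K^h$. It needs exactness along the limit trajectory itself, and that is where uniqueness of the optimal trajectory enters at the end.

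First step: exactness of the limit along any limit trajectory. Fix a subsequence $x^{(m_j)}\to x^*$ as in \Cref{thm:short_convergence}, and suppose $v_{k+1}^{h,*}(x_{k+1}^*)=v_{k+1}^h(x_{k+1}^*)$. The backward step at iteration $m_j+1$ builds $w$ with
\[
w(x_k^{(m_j)})=h\,\ell(x_k^{(m_j)},u)+\widehat v_{k+1}^{h,m_j}(F_h(x_k^{(m_j)},u))
\]
for the greedy control $u$, hence $w(x_k^{(m_j)})= T_h[\widehat v^{h,m_j}_{k+1}](x_k^{(m_j)})$. Since $\widehat v_k^{h,m_j+1}\ge w$ and the convergence is uniform and equi-Lipschitz, passing to the limit gives $v_k^{h,*}(x_k^*)\ge T_h[v_{k+1}^{h,*}](x_k^*)$. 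To conclude, I need to compare $T_h[v_{k+1}^{h,*}](x_k^*)$ with $T_h[v_{k+1}^h](x_k^*)$, and this is where the extra hypotheses are used. Let $\bar u$ be the maximizer for $v_k^h(x_k^*)$, unique by strong concavity. The semiconcavity of $v_{k+1}^h$ combined with semiconvexity (\Cref{prop:bellman_preserves_semiconvexity}) makes $v_{k+1}^h$ differentiable, in fact $C^{1,1}$. The lower bound $v_{k+1}^{h,*}\le v_{k+1}^h$ touches it at $x_{k+1}^*$, and $v^{h,*}_{k+1}$ is semiconvex as a sup of $c_{k+1}$-concave quadratics. So the semiconvex function squeezed below a $C^{1,1}$ one with contact is differentiable there with the same gradient. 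This yields a first-order match, but not equality at $F_h(x_k^*,\bar u)\neq x_{k+1}^*$ in general.

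Second step: show $\bar u=u_k^*$. The limiting optimality of $u_k^*$ for $v_{k+1}^{h,*}$, together with the gradient equality and strong concavity in $u$, gives that $u_k^*$ satisfies the first-order condition of the true one-step problem at $x_k^*$. Under uniform strong concavity this characterizes the maximizer, so $\bar u=u_k^*$. Then
\[
v_k^h(x_k^*)=h\ell(x_k^*,u_k^*)+v_{k+1}^h(x_{k+1}^*)=h\ell(x_k^*,u_k^*)+v_{k+1}^{h,*}(x_{k+1}^*)=v_k^{h,*}(x_k^*),
\]
using \eqref{eq:bellman_limit_traj_short}. The backward induction over $k$ therefore gives exactness along $x^*$ and shows that $x^*$ is generated by optimal controls from $x_0$, i.e. $x^*$ is an optimal trajectory.

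Final step: uniqueness and convergence of the whole sequence. Strong concavity in the control plus differentiability of $v_{k+1}^h$ yield a unique optimal control at each step, so the optimal trajectory from $x_0$ is unique. Since $X^{K+1}$ is compact and every convergent subsequence has this same limit, the full sequence $x^{(m)}$ converges. The main obstacle is the second step. Strong concavity must make the first-order condition sufficient, and the one-step maximization need not be concave globally unless the hypothesis is read as concavity of the full map $u\mapsto h\ell(x,u)+v_{k+1}^h(F_h(x,u))$. I would adopt that reading, and if needed use the $c$-semiconvex quadratic minorant through $x_{k+1}^*$ to transfer concavity to $v_{k+1}^{h,*}$ locally.
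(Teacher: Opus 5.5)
Your proof is correct and follows the paper's argument. The shared steps are:
\begin{enumerate}
\item a backward induction from $v_K^{h,*}=\phi=v_K^h$;
\item a contact argument between the semiconvex limit $v_{k+1}^{h,*}$ and the semiconcave $v_{k+1}^h$, which carries the optimality of $u_k^*$ from the approximate one-step problem to the exact one;
\item strong concavity in $u$, which makes that first-order condition sufficient and the maximizer unique;
\item compactness, which turns subsequential limits of $x^{(m)}$ into convergence of the whole sequence to the unique optimal trajectory.
\end{enumerate}

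The one genuine difference is cosmetic. You apply the tangency argument in the state variable at $x_{k+1}^*$ and pull it back through the affine map $u\mapsto F_h(x_k^*,u)$. The paper applies it directly to $q^*\le q$ at $u_k^*$.

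Two parts of your write-up are unnecessary: the re-derivation of the limiting Bellman equality, which is already in the previous theorem, and the closing remark about transferring concavity to $v_{k+1}^{h,*}$. Also, your opening says "forward induction"; the argument you actually carry out is backward.
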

	
	\begin{skproof}
          Let  $x^*$ be a limit trajectory  as in \Cref{thm:short_convergence}.
       Let us show the above equality by backward induction on $k=0,\ldots , K-1$. Consider the two functions
          $q(u)=h\,\ell(x_k^*,u)+v_{k+1}^h(F_h(x_k^*,u))$ and
          $q^*(u)=h\,\ell(x_k^*,u)+v_{k+1}^{h,*}(F_h(x_k^*,u))$.
       By assumption, $q$ is semiconcave and  $q^*$ is semiconvex.
       Moreover $q^*\leq q$ 	  since $v_{k+1}^{h,*}\le v_{k+1}^h$.
          If $v_{k+1}^{h,*}$ and $v_{k+1}^h$ coincide at  $x_{k+1}^*$, we have that
          $q(u_k^*)=q^*(u_k^*)$. The contact between a semiconvex function and a semiconcave one implies equality of their gradients at $u_k^*$, leading to the
          optimality of $u_k^*$ for Bellman maximization problem for $v_{k+1}^h$ at $x_k^*$, and the equality of $v_{k}^{h,*}$ and $v_{k}^h$ at point $x_{k}^*$.
          By strong concavity in the control variable, the maximizer is unique, hence the limit control $u_k^*$ must coincide with the unique optimal feedback.
		A backward induction concludes the proof.
	\end{skproof}
        	
	\begin{remark}
          The discrete control problem considered here may be viewed as a finite-dimensional multi-stage optimization problem, and the previous result shows that the limiting trajectory generated by the algorithm is globally optimal for this problem. 
A counter example of \cite{akian2025stochastic} shows that
the conclusion of  \Cref{thm:short_exactness} is false in general.
This is false in particular for one time step, and some final reward equal to the supremum of two quadratic functions (which is then semiconvex and can be represented  by a tropical tensor of rank 2 but is not semiconcave).
	\end{remark}
        In order to guarantee the semiconcavity of the exact discrete value functions $v_k^h$, as assumed in the previous result, one may strengthen the standing assumptions, for example, by assuming that the terminal reward is semiconcave, that the running reward is uniformly semiconcave in the state variable and strongly concave in the control variable, and that the associated one-step Bellman maximization problem is uniformly strongly concave. 
        In particular, a $C^{1,1}$ regularity of the reward function is sufficient. 

	\section{NUMERICAL RESULTS} \label{numerics_nbody}

	In this section, we apply our algorithm to 
        an $N$-body system described in Section~\eqref{section-algo}.  
	\subsection{Problem setup}
	
We consider a planar swarm with $N$ agents over a horizon $T=20$. 
Each agent evolves in dimension $2$, so the state and control are $x,u\in\R^{2N}$.  
	The dynamics, and the running and terminal rewards of the associated deterministic optimal control problem (such that $v(x,0)=-\mathcal V(x,T)$) are
	\begin{equation*}
		\left\{
		\begin{aligned}
                  &f(x,u)=u\ , \\
			&\ell(x,u) = -\Big(\tfrac12 k_{\rm trap}\|x\|^2 + W_\varepsilon(x) + \tfrac12 u^\top R\,u\Big) \ , \\ 
			&\Phi_T(x) \;=\; -\tfrac12 k_T \|x\|^2 \ ,
		\end{aligned}
		\right.
	\end{equation*}
	with $
	k_{\rm trap}=0.35, k_T=5.0, R=0.5\,I_{2N}$. 
	The interaction is a softened Coulomb potential
	\begin{align*}
		W_\varepsilon(x) = \sum_{1\le i<j\le N}\frac{\kappa}{\sqrt{\|x_i-x_j\|^2+\varepsilon^2}} \ ,
	\end{align*}
	where $\varepsilon=0.2, \kappa = 1.5$.
        We apply the time discretization \eqref{eq:bellman_equation},
         with uniform step $h=\Delta t=0.01$ ($K=2000$). 
	We run $500$ outer (rank) iterations. 
	A no-Coulomb LQR solution is computed as a simple baseline lower bound for $\mathcal V(x,T)$.
	
	Because each backward sweep adds one supporting function at every time step, the total number of supporting functions grows linearly with the number of iterations. 
	In practice, many supporting functions are never active. 
	We therefore use an optional pruning 
        step (see, for instance, \cite{gaubert2011curse,gaubert2014bundle,Qu2014}) that probes the current max-plus expansion at random points and retains only the supporting functions that are active at least once. 
	This considerably reduces runtime, while preserving the same qualitative behavior.

	\subsection{Numerical results: symmetric initial states.}\label{sec-app1}

	We first consider equally spaced initial positions on a circle of radius $10$. 
        \Cref{fig:application1_traj} shows the trajectories obtained after $500$ iterations for $N=5$ and $N=10$. 
	In both cases, the agents rapidly converge 
          to a nearly circular configuration and then remain close to a steady annulus for most of the horizon before moving inward near the terminal time. 
	This phenomenon is consistent with the \emph{turnpike property}~\cite{trelat2015turnpike,avskovic2024linear,trelat2025turnpike} in optimal control, 
	which asserts that for long horizons, optimal trajectories spend most of their duration near a steady-state configuration minimizing the instantaneous Lagrangian. 
	
	The corresponding equilibrium radius is determined by the balance between the inward quadratic trap and the outward Coulomb repulsion. 
	If the particles are approximately evenly distributed on a circle of radius $r$, a simple scaling argument gives
	\begin{equation*}\label{eq:turnpike_radius}
		r^* = O\Big(\tfrac{\kappa\,N^2}{k_{\mathrm{trap}}}\Big)^{1/3},
	\end{equation*}
	which is consistent with the fact that the annulus becomes larger as $N$ increases. 
	Near terminal time, the terminal cost dominates and produces the final coordinated contraction towards the origin. 

	\begin{figure}[htb]
		\centering
		\subfloat[Trajectories obtained at iteration $500$ for a $5$ body system with symmetric initial states spaced on a circle of radius $10$. All agents move symmetrically towards the origin.\label{fig:cas1_1}]{
			\includegraphics[width=0.44\linewidth]{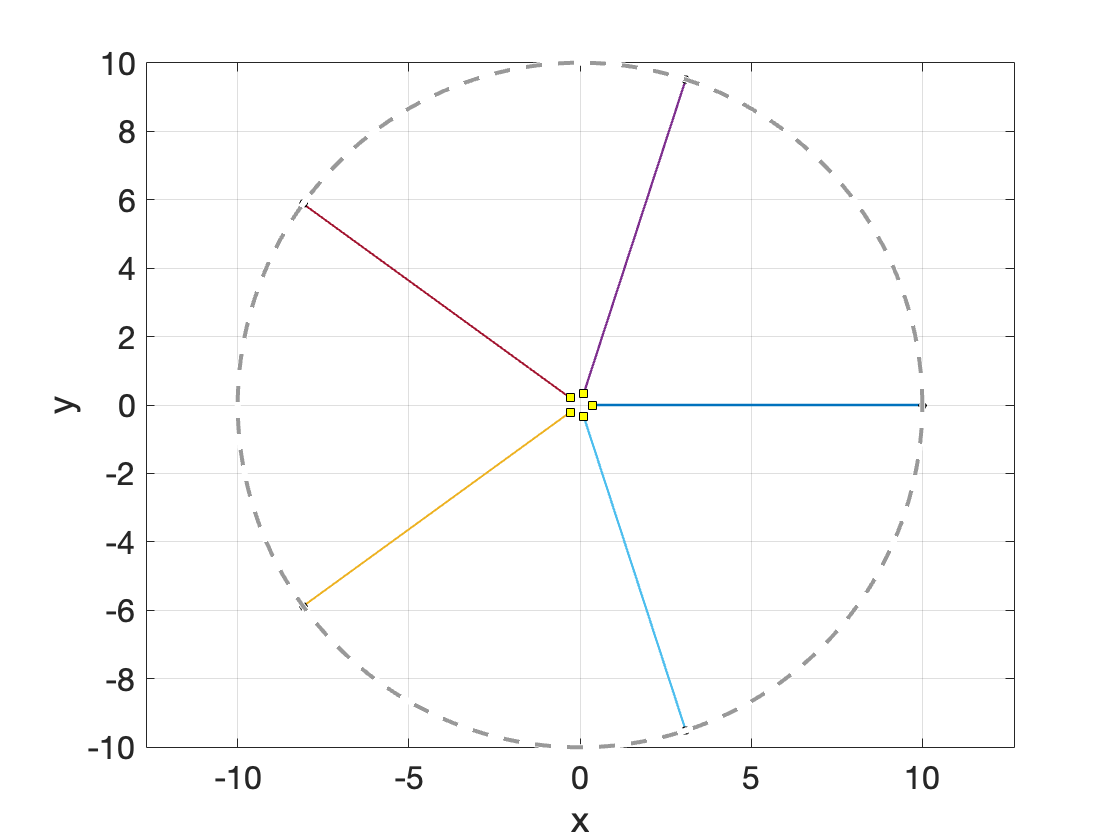}
		}
		\hfill
		\subfloat[The evolution of the agents’ radii $|x_i|$ exhibits a \emph{turnpike} behavior. Agents remain on a nearly constant-radius circle for most of the horizon, then move inward sharply near the end.\label{fig:case1_2}]{
			\includegraphics[width=0.44\linewidth]{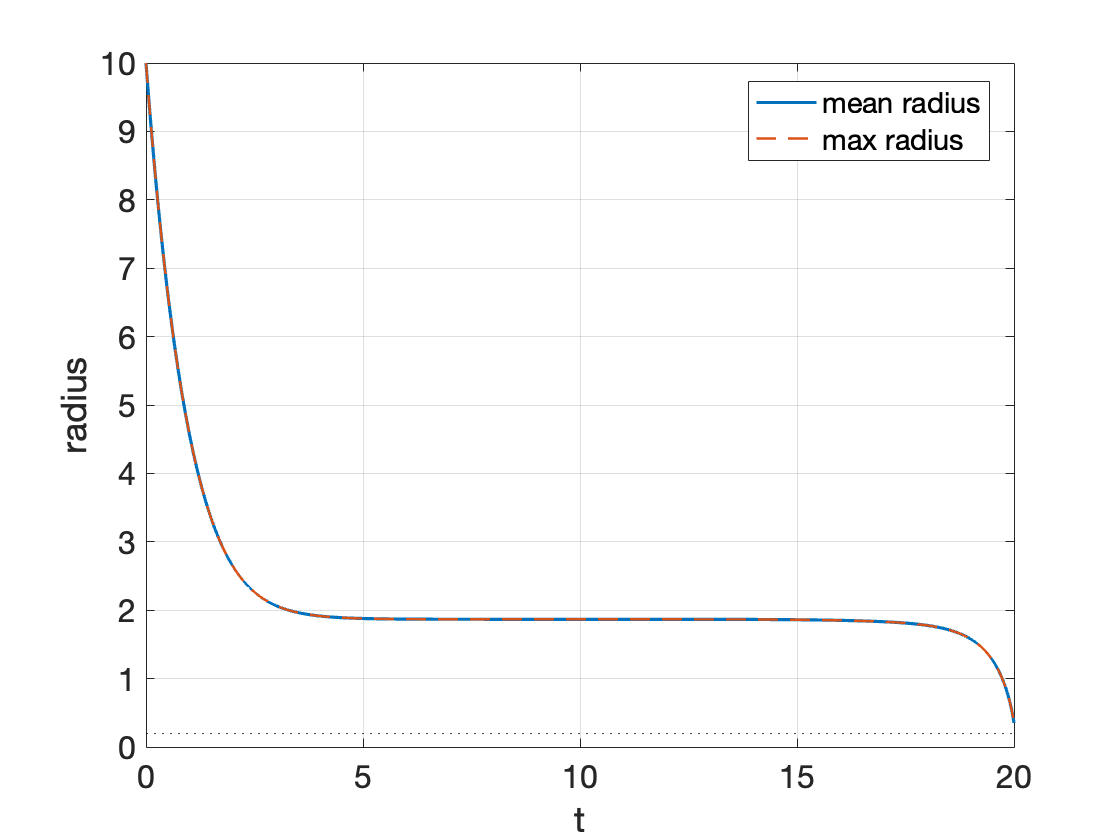}
		}
		
		
		\subfloat[Trajectories obtained at iteration $500$ for a $10$ body system with symmetric initial states spaced on a circle of radius $10$.\label{fig:case2_1}]{
			\includegraphics[width=0.44\linewidth,trim=0 0 0 2cm, clip]{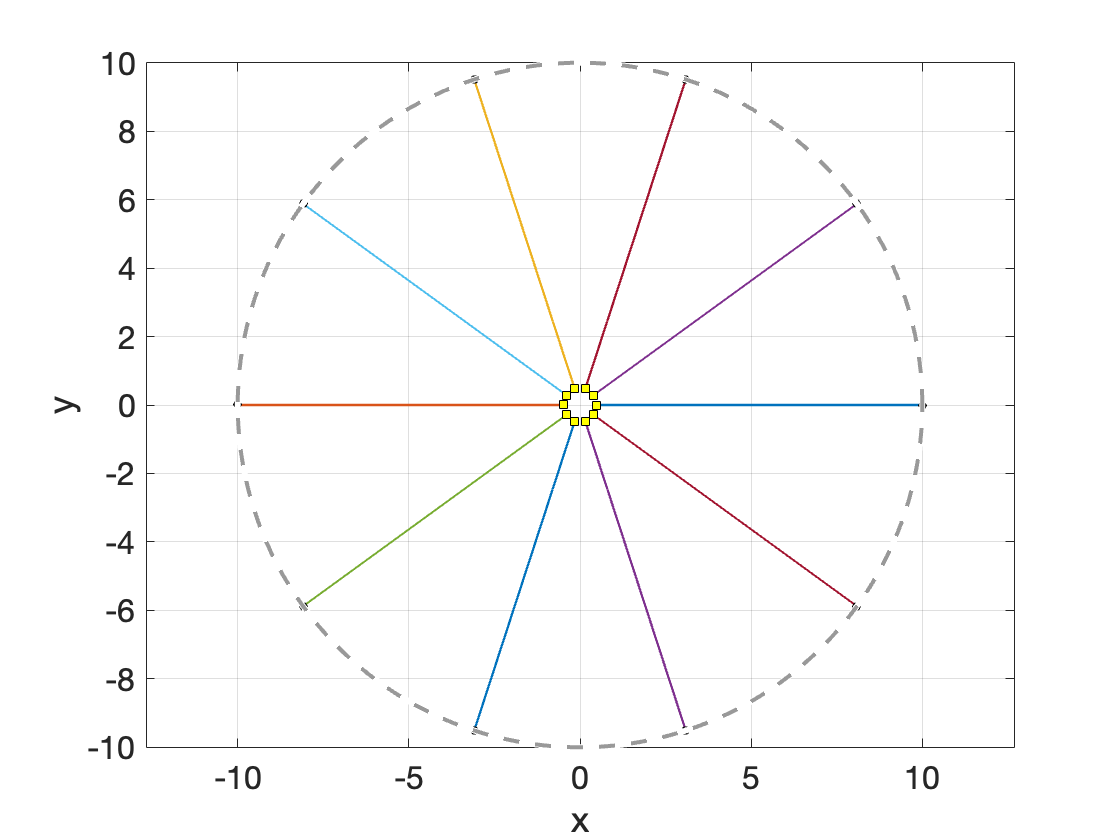}
		}
		\hfill
		\subfloat[Turnpike behaviour is also observed for the $10$ body system, with larger radii than for the $5$ body system.\label{fig:case2_2}]{
			\includegraphics[width=0.44\linewidth]{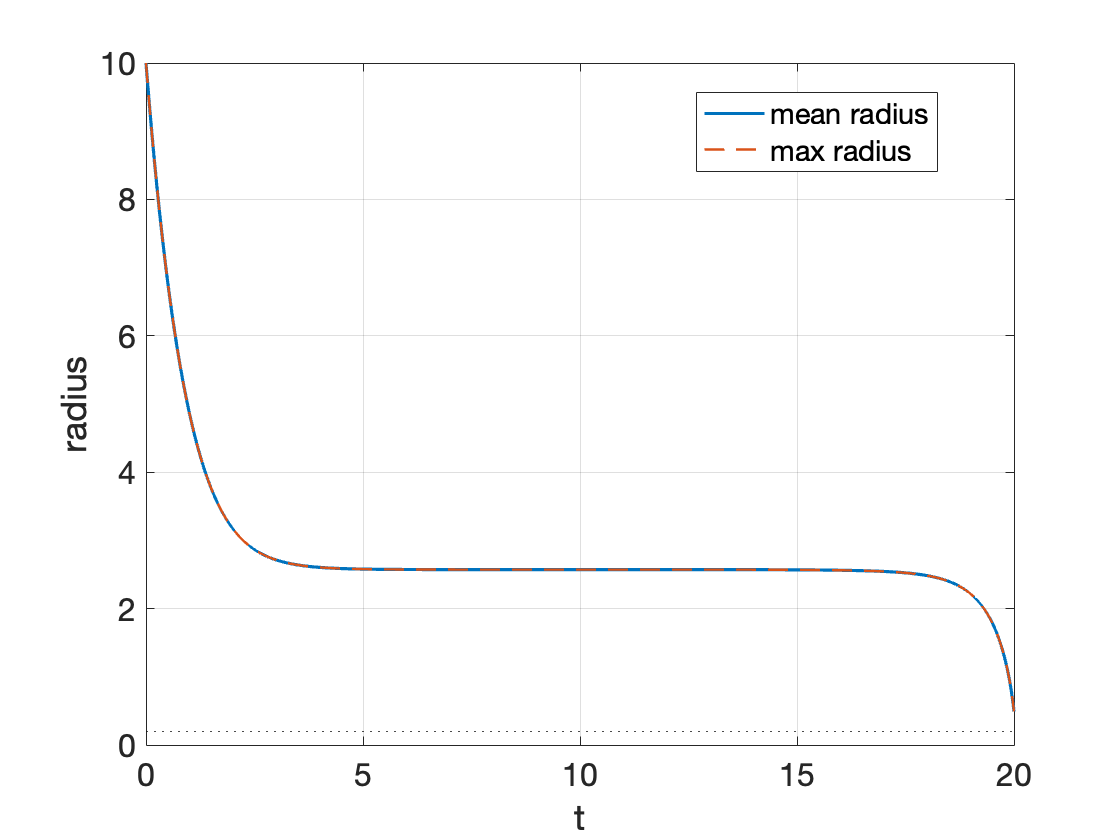}
		}
		\caption{Optimal trajectories for equally spaced initial positions on a circle of radius $10$.}
		\label{fig:application1_traj}
	\end{figure}

	\begin{figure}[htb]
		\centering
		\subfloat[Value of $\mathcal V(x_0,T)$ w.r.t.\ number of iterations, for a $5$ body system.\label{fig:cas1_value1}]{
			\includegraphics[width=0.44\linewidth]{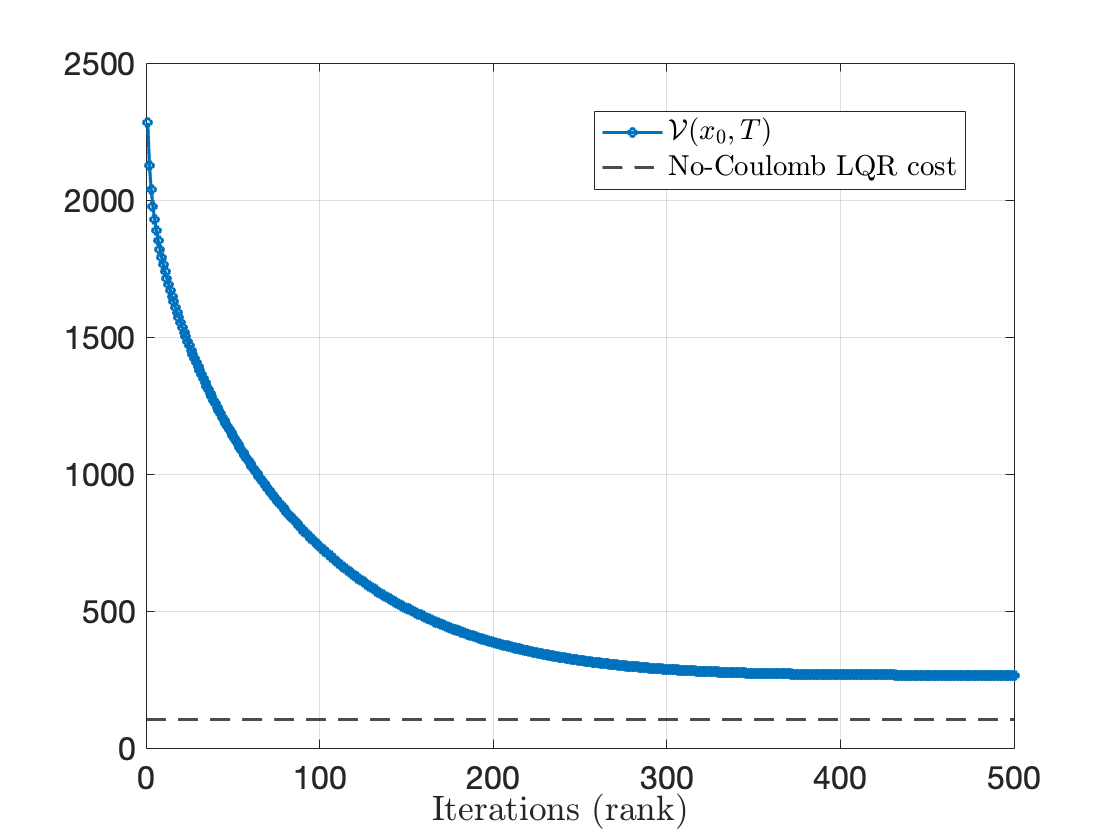}
		}
		\hfill
		\subfloat[Value of $\mathcal V(x_0,T)$ w.r.t.\ number of iterations, for the $10$ body system.\label{fig:case1_value2}]{
			\includegraphics[width=0.44\linewidth]{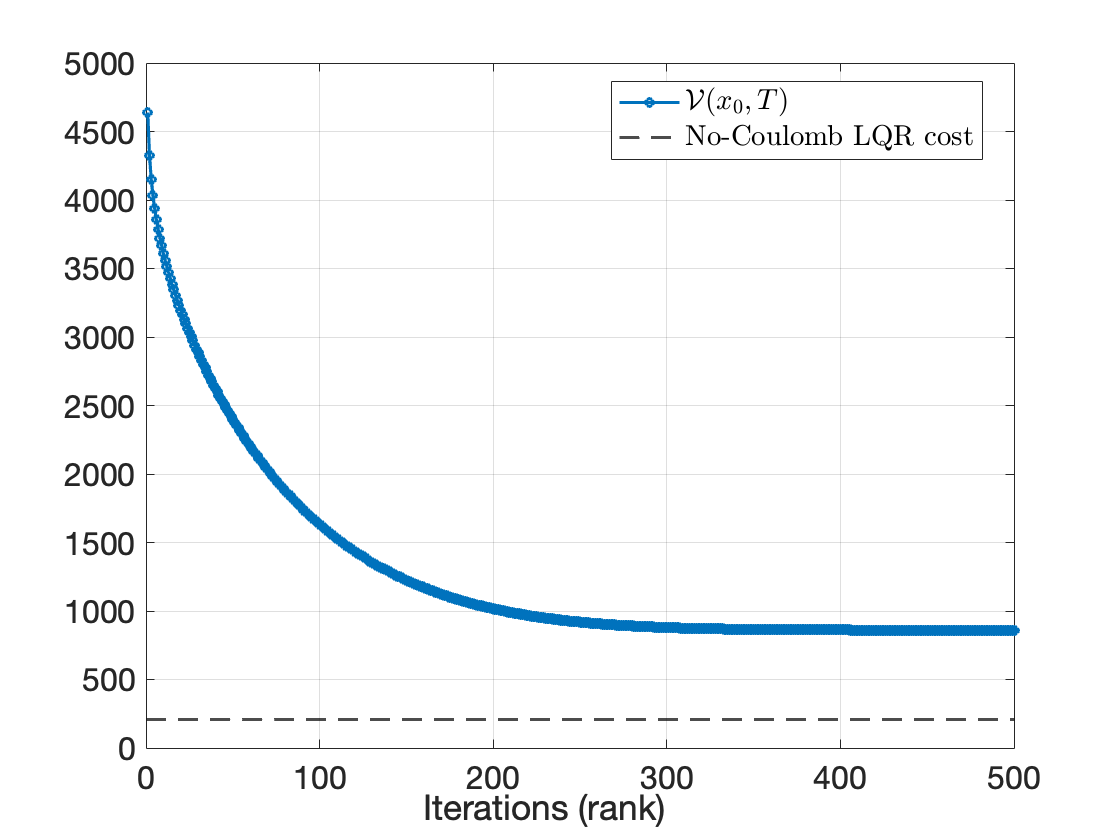}
		}
		\caption{The approximate value of the total action functional at initial states.}
		\label{fig:application1_value}
	\end{figure}
	
	In~\Cref{fig:application1_value}, we report the approximation of the value $\mathcal V(x_0,T)$ of the $N$-body system at the initial state $x_0$, as a function of the outer iteration number. 
	The curves are monotone, in agreement with \Cref{prop:monotone_lower_bounds} that each iteration enriches the max-plus representation by one additional supporting function and improves the lower bound of $v$, so the upperbound of $\mathcal V$. 
	The LQR baseline obtained by neglecting the Coulomb interaction is also shown for comparison.
	
\subsection{Random initial states with terminal cost}
	
	We also consider random initial states sampled in an annulus with inner and outer radii $(r_{\min},r_{\max})=(1,2)$, together with the same terminal cost as above. 
	A representative $30$-agent example is shown in~\Cref{fig:application3_traj}. 
	Compared with the symmetric case, the initial transient trajectory is no longer perfectly synchronized, but the same three-phase structure is observed: a quick organization phase, a long metastable annular regime, and a final contraction induced by the terminal cost. 
	The third panel again shows monotone improvement of the value at the initial state. 
	\begin{figure}[htb]
		\centering
		\subfloat[Trajectories for $N=30$.\label{fig:cas5_traj1}]{
                    \includegraphics[width=0.295\linewidth]{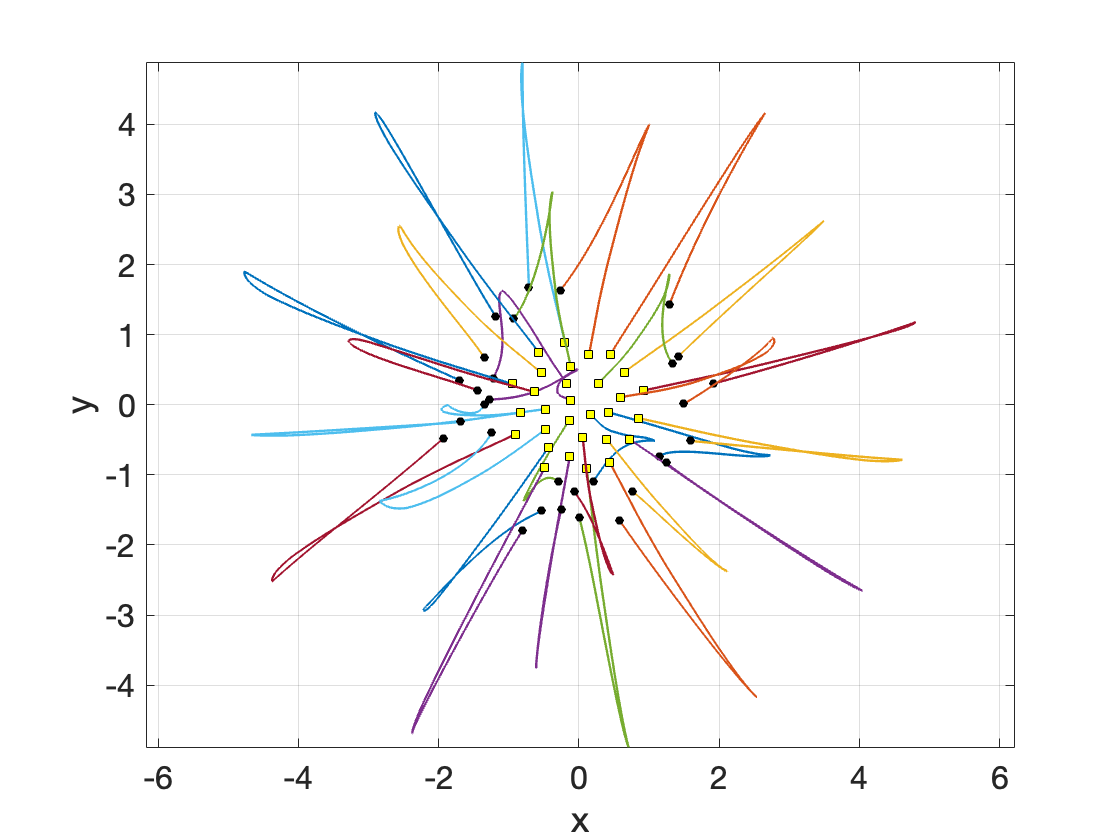}
		}
		\hfill
		\subfloat[Mean and maximum radii.\label{fig:case5_traj2}]{
			\includegraphics[width=0.295\linewidth]{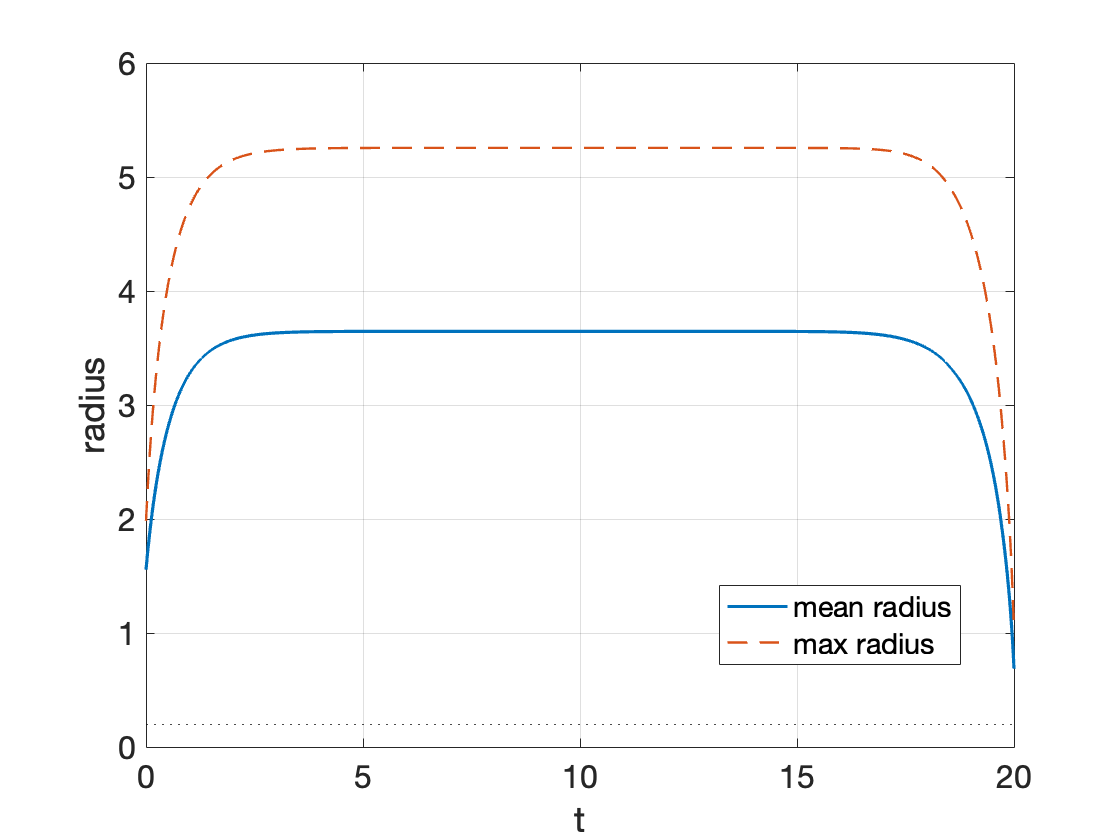}
		}
		\hfill
		\subfloat[Value of $\mathcal V(x_0,T)$.\label{fig:case5_value}]{
                  \includegraphics[width=0.295\linewidth,trim=0 0 0 2cm, clip]{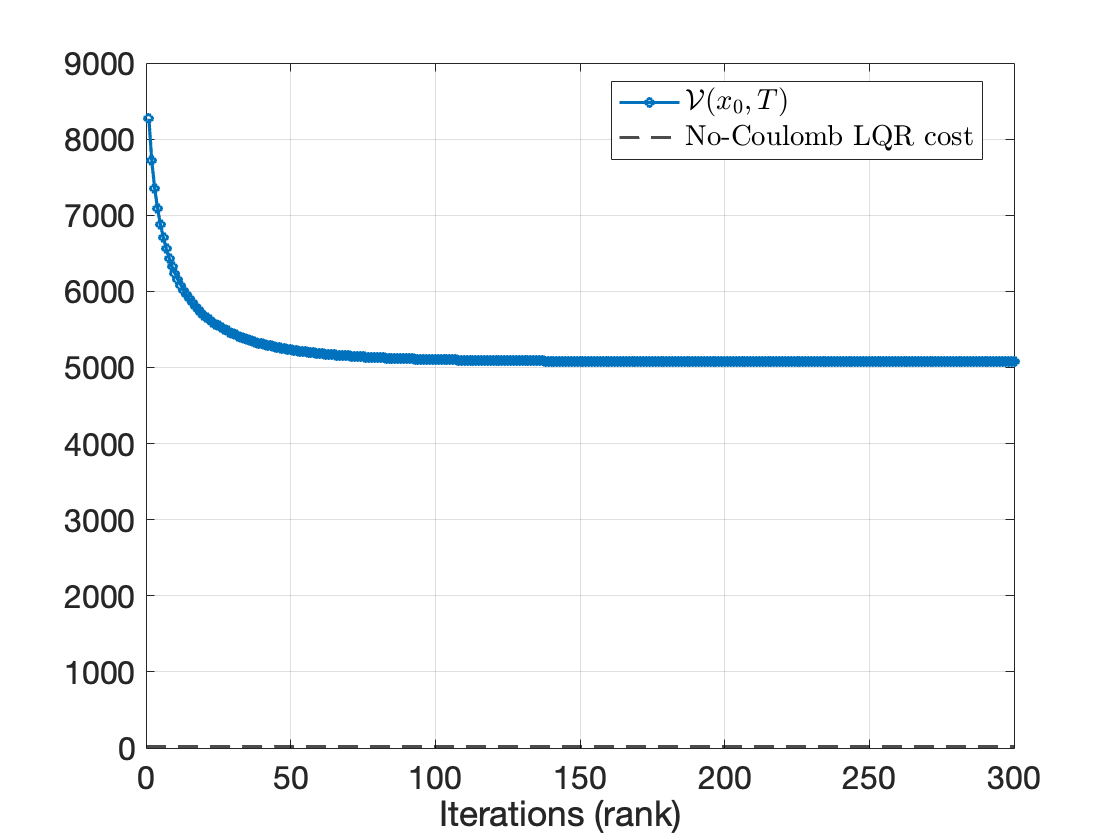}
		}
		\caption{Representative random-initialization experiment with $30$ agents and terminal cost.}
		\label{fig:application3_traj}
	\end{figure}

	\subsection{Random initial states and large-scale behavior}\label{sec-app2}
	
	We next consider a larger-scale random configuration. \Cref{fig:application4_traj} shows a run with $N=100$ agents and a terminal cost. 
	The first figure displays the trajectories, the second the evolution of the mean and maximum radii, and the third the value at the initial state as a function of the number of outer iterations. 
	The turnpike structure remains visible even at this scale, and the value curve remains monotone. 
	The radii plot indicates that, despite the random initialization, the particles rapidly concentrate near an effective annular steady state before the terminal cost forces the final inward motion. 
	In our implementation, this corresponds to a state dimension of $200$ and can be handled on a standard laptop with Apple M3 chip within a runtime budget near half hour. 
	This suggests that the method benefits strongly from exploiting separability in the representation, even though the underlying interaction is fully coupled.
	\begin{figure}[htb]
          	  \centering
          		\subfloat[Trajectories for $N=100$.\label{fig:cas6_traj1}]{
			\includegraphics[width=0.3\linewidth]{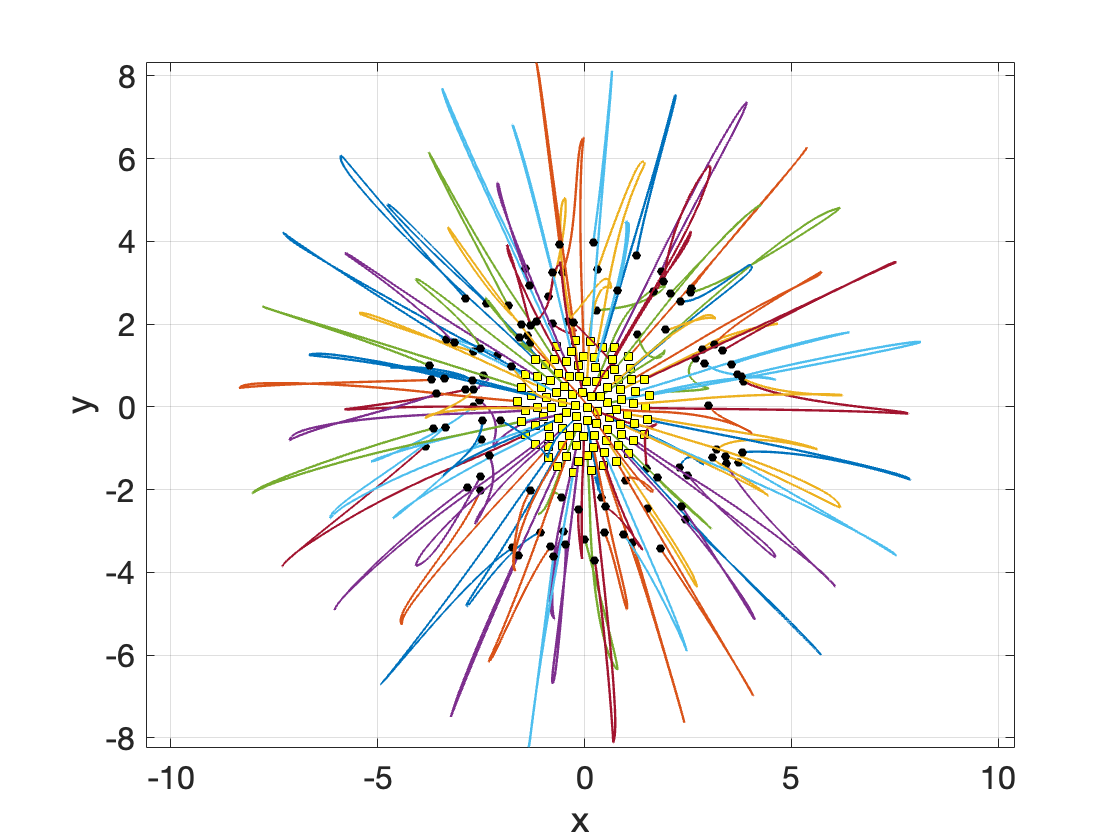}
		}
		\hfill
		\subfloat[Mean and maximum radii.\label{fig:case6_traj2}]{
			\includegraphics[width=0.3\linewidth]{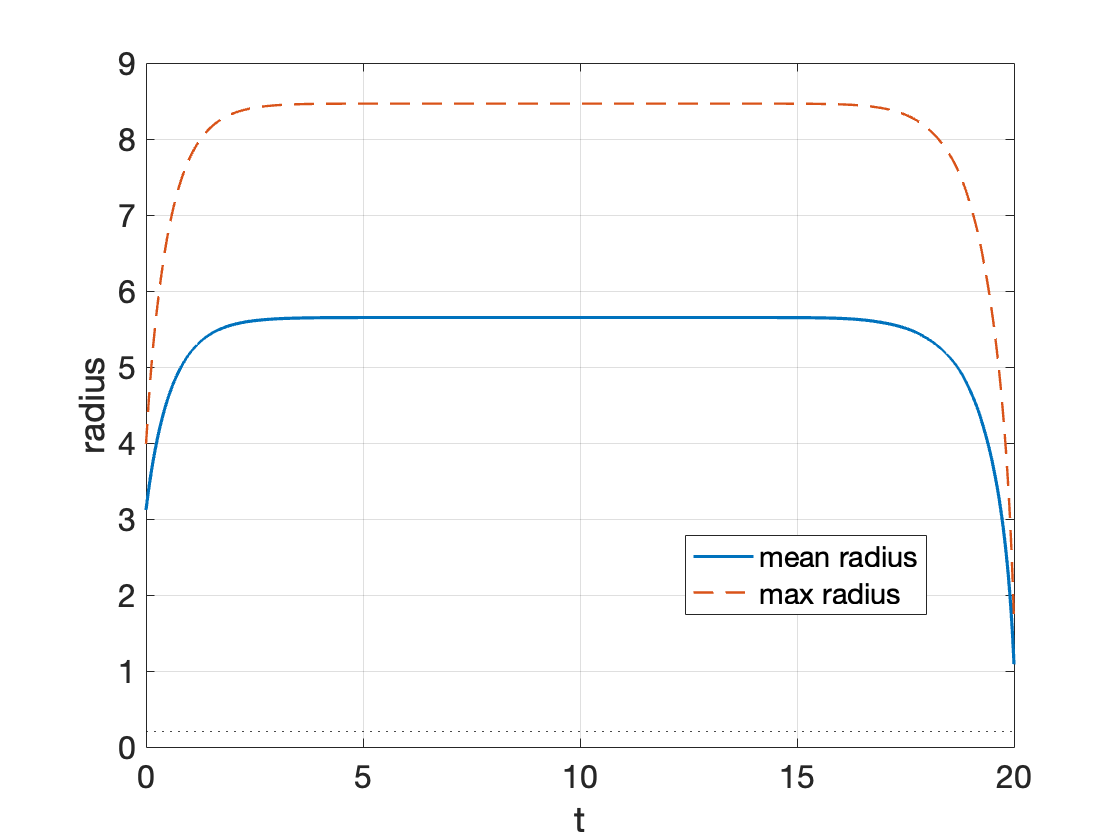}
		}
		\hfill
		\subfloat[Value of $\mathcal V(x_0,T)$. \label{fig:case6_value}]{
			\includegraphics[width=0.3\linewidth]{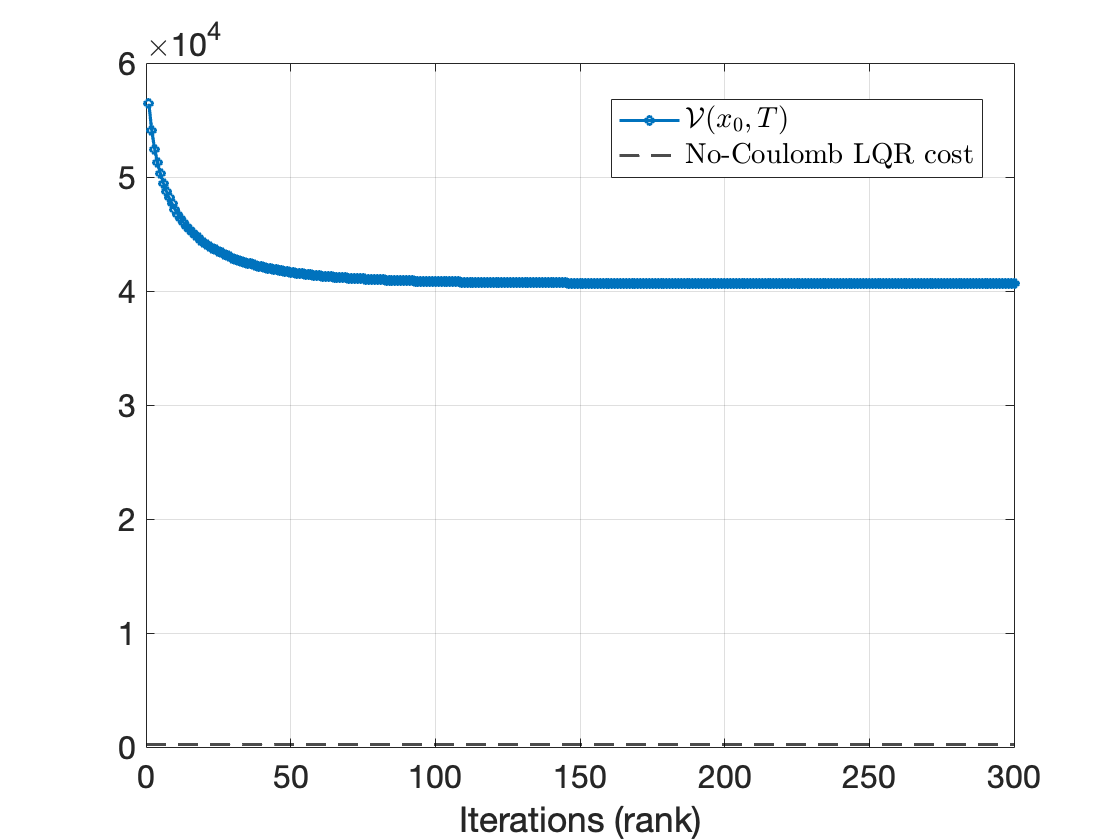}
		}
		\caption{Large-scale experiment with random initial positions and terminal cost.}
		\label{fig:application4_traj}
	\end{figure}

	
	\bibliographystyle{IEEEtran}
	\bibliography{ref}
	
\end{document}